\DeclareMathOperator{\rank}{rank}
\begin{document}
\title{Cleavability and scattered sets of non-trivial fibers}
\author{Shari Levine}
\date{June 7, 2011}
\maketitle
\hrulefill

\newtheorem{thm}{Theorem}[section]
\newtheorem{lemma}[thm]{Lemma}
\newtheorem{definition}[thm]{Definition}
\newtheorem{obs}[thm]{Observation}
\newtheorem{question}{Question}
\newtheorem{claim}[thm]{Claim}
\newtheorem{prop}[thm]{Proposition}

\section{Introduction}

A space $X$ is said to be \textit{cleavable} over a space $Y$ \textit{along} $A \subseteq X$ if there exists a continuous $f: X \rightarrow Y$ such that $f(A) \cap f(X \setminus A) = \emptyset$. A space $X$ is \textit{cleavable over} $Y$ if it is cleavable over $Y$ along all $A \subseteq X$. The subject was introduced by A.~V.~Arhangel'ski\u\i\ and D.~B.~Shakh\-matov in \cite{splitting}, though it was originally termed \textit{splitting}, and it was in \cite{Arhangelskii} that A.~V.~Arhangel'ski\u\i\ posed the main questions related to the study of cleavability:

\begin{question} When does cleavability of a space $X$ over a Hausdorff space $Y$ imply the existence of a homeomorphism from $X$ to a subspace of $Y$? \end{question} 

\begin{question} \label{q} Let $X$ be an infinite compactum cleavable over a linearly ordered topological space (LOTS). Is $X$ a LOTS? \end{question}

Results related to these questions can be found in, but are not limited to, the following papers: \cite{Arhangelskii}, \cite{Arhangelskii2}, \cite{buzyakova}.

It is customary in this field that if $f$ is a continuous function, then we represent the set of points on which $f$ is not injective as $M_f$.

In this paper, we show that if $X$ is a compact space cleavable over a separable LOTS $Y$ such that for some continuous $f:X \rightarrow Y$, $M_f$ is scattered, then $X$ is a LOTS. We do so by first considering the case when $X$ is totally disconnected (Section $2$), and then use that result to prove it for any compact $X$ (Section $3$). 

\section{Totally Disconnected $X$}

In this section we show that if $X$ is a totally disconnected compact space cleavable over a separable LOTS $Y$ such that for some continuous $f:X \rightarrow Y$, $M_f$ is scattered, then $X$ is a LOTS. We do so by showing that there exists a LOTS $\hat{Y}$ and an injective continuous function $\hat{f}$ mapping $X$ into $\hat{Y}$. As $\hat{f}$ is a closed map, $\hat{f}$ must be a homeomorphism, making $X$ a closed subspace of a LOTS, and therefore a LOTS itself. The main results of this section is given by Theorems~\ref{scattered} and \ref{result}, with the rest of the section containing tools needed for the proofs of the aforementioned theorems. The most important of these tools are Lemmas~\ref{partition} and \ref{less}, and we explain their importance before the statements of the lemmas. 

Proving that $X$ is a LOTS when it is totally disconnected is crucial to the proof for when $X$ is not assumed to be totally disconnected. Before we prove either, however, it is important to provide a few definitions. 

\begin{definition} Let $X$ be a topological space, and let $A$ be a subset of $X$. The \textbf{derived set} of $A$, written as $A'$, is the set of all limit points of $A$. \end{definition}

\begin{definition} For ordinal numbers $\alpha$, the $\alpha$-th \textbf{Cantor-Bendixson derivative} of a topological space $X$ is defined by transfinite induction as follows:
\begin{itemize}
\item $X^0 = X$
\item $X^{\alpha+1} = (X^{\alpha})'$
\item $\displaystyle X^{\lambda} = \bigcap_{\alpha < \lambda} X^{\alpha}$ for limit ordinals $\lambda$.\end{itemize}
The smallest ordinal $\alpha$ such that $X^{\alpha+1} = X^{\alpha}$ is called the \textbf{Cantor-Bendixson rank of $X$}, written as $\rank(X)$. The least ordinal $\beta$ such that $x \notin X^{\beta}$ is called the \textbf{rank of $x$}, written as $\rank(x)$ \end{definition}

To clarify, if we say the rank of $x$ is $\beta+1$, we mean that $X^{\beta}$ is the last derived set of $X$ of which $x$ is an element. 

\begin{definition} Let $X$ be a compact LOTS, and let $A \subset X$ be closed. We say a non-empty open interval $(a,b) \subset X \setminus A$ is \textbf{maximal} if either $a$ and $b$ are both elements of $A$, or one is an element of $A$, and the other is an endpoint of $X$. \end{definition}

The following proposition and theorem are from \cite{buzyakova} and \cite{contributions} respectively.

\begin{prop} \label{basics} If $X$ is a compact $T_2$ space cleavable over a separable LOTS $Y$, then $X$ is separable and first-countable. \end{prop}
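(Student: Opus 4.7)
The plan is to handle first-countability and separability separately; first-countability follows fairly directly from the cleavability hypothesis, while separability is the delicate part.

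I would begin by verifying that the target space $Y$ itself is first-countable. Given a countable dense subset $D\subseteq Y$, for each $y\in Y$ and each ``side'' of $y$ either $y$ has an immediate neighbor there (contributing a trivial half-neighborhood) or points of $D$ cluster at $y$ from that side, so countably many open intervals with endpoints in $D\cup\{y\}$ form a local base at $y$. Consequently every singleton in $Y$ is a $G_\delta$.

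Next, for first-countability of $X$, it suffices to show that each singleton $\{x\}\subseteq X$ is a $G_\delta$, since in a compact Hausdorff space the $G_\delta$-property at a point is equivalent to first-countability at that point. Fix $x\in X$ and apply cleavability along $A=\{x\}$ to obtain a continuous map $f_x:X\to Y$ with $f_x(\{x\})\cap f_x(X\setminus\{x\})=\emptyset$, i.e.\ $f_x^{-1}(f_x(x))=\{x\}$. Pulling back a countable pseudobase at $f_x(x)$ through $f_x$ expresses $\{x\}$ as a countable intersection of open sets in $X$.

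For separability, the plan is to establish that $X$ has countable $\pi$-weight, from which separability follows in any compact $T_2$ space. The separable LOTS $Y$ has countable $\pi$-weight, witnessed by the open intervals whose endpoints lie in the countable dense subset $D$. The strategy would be to exploit cleavability to pull this $\pi$-base of $Y$ back to a countable $\pi$-base of $X$: for each pair of points of $X$ that one needs to separate, cleavability produces a continuous $X\to Y$ collapsing a specified set, and one wants to show that a countable sub-collection of such maps suffices. The main obstacle is precisely this countability requirement: cleavability a priori produces a map indexed by each subset of $X$, yielding an unmanageable family. Overcoming this requires combining the first-countability already established with the ccc property of a separable LOTS (any disjoint family of open intervals picks up distinct points of $D$), which transfers via a continuous surjective restriction to force a putative uncountable disjoint family in $X$ to collapse. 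The crux is to choose a single cleaving function (or a countable collection of them) that is sensitive enough to refine a $\pi$-base; this is where the LOTS structure of $Y$, rather than merely its separability, enters the argument in an essential way.
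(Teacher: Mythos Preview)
The paper does not prove this proposition at all; it is quoted from Buzyakova's paper and simply cited without argument. So there is no in-paper proof to compare against, only the external reference.

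On the merits of your attempt: the first-countability half is correct and is the standard argument. Cleaving along $\{x\}$ yields a continuous $f_x:X\to Y$ with $f_x^{-1}(f_x(x))=\{x\}$; a separable LOTS is first-countable, so $\{f_x(x)\}$ is a $G_\delta$, hence so is $\{x\}$, and in a compact Hausdorff space this gives a countable local base at $x$.

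The separability half, however, is a plan rather than a proof, and the plan does not close. You correctly isolate the obstacle---cleavability hands you one map per subset of $X$, an a priori enormous family---but you do not say how to extract a countable subfamily whose pullbacks form a $\pi$-base. Appealing to ccc of $Y$ together with first-countability of $X$ does not bridge the gap: ccc would transfer to $X$ only through a map (or countably many maps) already sensitive to an arbitrary disjoint family, which is precisely what you are trying to manufacture; and even if you obtained ccc for $X$, compact $+$ first-countable $+$ ccc does not imply separable (a Souslin continuum is a counterexample). Your final sentence, that ``this is where the LOTS structure of $Y$ \ldots\ enters the argument in an essential way,'' is a promissory note, not an argument. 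To finish you would need a concrete mechanism---for example, an embedding of $X$ into a countable product of copies of $Y$, or a direct order-theoretic bound on $d(X)$ in terms of $d(Y)$---or else to consult Buzyakova's original proof for the missing step.
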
 

\begin{thm} \label{contributions} If $X$ is a countable compact metric space then $X$ is homeomorphic to a countable ordinal. \end{thm}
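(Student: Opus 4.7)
The plan is to carry out a transfinite induction on the Cantor--Bendixson rank of $X$, after first observing that $X$ must be scattered and zero-dimensional.

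Since $X$ is a compact metric space (hence Polish) and every non-empty perfect subset of a Polish space is uncountable, countability of $X$ forces its perfect kernel to be empty. Thus $X$ is scattered, $\rank(X)$ exists as a countable ordinal, and $X^{\rank(X)} = \emptyset$. Write $\rank(X) = \alpha + 1$; this is a successor, for at a limit $\lambda$ compactness and the finite intersection property would force $X^\beta = \emptyset$ already for some $\beta < \lambda$. Moreover $X^\alpha$ is non-empty and \emph{finite}, since an infinite compact metric subspace would contain a limit point lying in $X^{\alpha+1}$. Finally $X$ is zero-dimensional, being compact Hausdorff and totally disconnected (countability gives total disconnectedness).

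I would then induct on $\alpha$. If $X^\alpha = \{p_1, \ldots, p_n\}$ has $n \geq 2$ points, zero-dimensionality permits a partition of $X$ into pairwise disjoint clopen sets $U_1, \ldots, U_n$ with $p_i \in U_i$; each $U_i$ satisfies $U_i^\alpha = \{p_i\}$, so once the singleton case is known I obtain $U_i$ homeomorphic to a countable ordinal $\gamma_i$ and hence $X$ homeomorphic to the ordinal sum $\gamma_1 + \cdots + \gamma_n$. So assume $X^\alpha = \{p\}$. Using first-countability together with zero-dimensionality, I fix a strictly decreasing clopen neighborhood base $V_0 \supset V_1 \supset \cdots$ of $p$ with $\bigcap_k V_k = \{p\}$, and set $W_k = V_k \setminus V_{k+1}$. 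Each $W_k$ is a compact countable metric space missing $X^\alpha$, hence of top rank strictly below $\alpha$, and the inductive hypothesis gives $W_k$ homeomorphic to some countable ordinal $\gamma_k$. The natural bijection from $X$ onto the ordinal $\gamma_0 + \gamma_1 + \gamma_2 + \cdots + 1$, sending each shell $W_k$ onto the block $\gamma_k$ in order and $p$ to the top, is a continuous bijection between compact Hausdorff spaces and therefore a homeomorphism.

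The delicate step will be the limit-$\alpha$ case of this induction: I must choose the $V_k$ so that the ranks of the shells $W_k$ are \emph{cofinal} below $\alpha$, or else the top element of $\gamma_0 + \gamma_1 + \cdots$ may fail to acquire Cantor--Bendixson rank $\alpha + 1$ in the ordinal topology. This cofinality is attainable because $p$ has rank $\alpha + 1$ in $X$, so every neighborhood of $p$ meets $X^\beta \setminus \{p\}$ for each $\beta < \alpha$, and the $V_k$ can be shrunk to exhibit points of arbitrarily large sub-rank in each shell. Once this bookkeeping is arranged, the bijection above recovers the classical Mazurkiewicz--Sierpi\'nski form $X \cong \omega^\alpha \cdot n + 1$, which is in particular a countable ordinal.
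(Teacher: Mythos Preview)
The paper does not supply a proof of this theorem at all; it is quoted from the reference \cite{contributions} (this is the classical Mazurkiewicz--Sierpi\'nski theorem) and used as a black box. There is therefore nothing in the paper to compare your argument against.

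Your proposal is a correct self-contained proof, following the standard Cantor--Bendixson route: countability kills the perfect kernel, compactness forces the rank to be a successor $\alpha+1$ with $X^{\alpha}$ finite, zero-dimensionality lets you split off the points of $X^{\alpha}$ and then peel a nested clopen base around the remaining point into shells $W_k$ of strictly smaller inductive parameter. The continuity check for the assembled bijection onto $\gamma_0+\gamma_1+\cdots+1$ goes through because each $W_k$ is clopen and the tails $V_{k}$ form a base at $p$.

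One remark on your final paragraph: the cofinality bookkeeping you flag for limit $\alpha$ is needed only if you insist on recovering the sharp normal form $\omega^{\alpha}\cdot n + 1$. For the statement as actually written (``homeomorphic to \emph{a} countable ordinal''), any choice of nested clopen base works: whatever the shell ranks turn out to be, the sum $\gamma_0+\gamma_1+\cdots+1$ is still a countable successor ordinal, and your continuous bijection from a compact space onto it is automatically a homeomorphism. So that worry can be dropped here, though it is a genuine issue for the finer classification.
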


\begin{lemma} If $X$ is a countable, compact $T_2$ space cleavable over a separable LOTS $Y$, then every closed $A \subseteq X$ is homeomorphic to a countable ordinal. \end{lemma}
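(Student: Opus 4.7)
The plan is to reduce this to Theorem~\ref{contributions} by showing that $X$ is metrizable; then any closed subspace $A$ is automatically countable, compact, Hausdorff, and metrizable, and the result follows.

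First I would invoke Proposition~\ref{basics}: since $X$ is a compact $T_2$ space cleavable over a separable LOTS, $X$ is first-countable. Combined with the hypothesis that $X$ is countable, this gives second-countability for free: for each point $x \in X$, fix a countable local base $\mathcal{B}_x$, and observe that $\bigcup_{x \in X} \mathcal{B}_x$ is a countable family of open sets that forms a base for the topology on $X$.

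Next I would apply the Urysohn metrization theorem. A compact Hausdorff space is normal, hence regular, and a regular second-countable space is metrizable. Therefore $X$ is a countable compact metrizable space. Any closed subset $A \subseteq X$ inherits all of these properties: $A$ is countable, compact, $T_2$, and metrizable.

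Finally, I would appeal directly to Theorem~\ref{contributions} applied to $A$: any countable compact metric space is homeomorphic to a countable ordinal, which is exactly the desired conclusion. There is no real obstacle here; the argument is essentially a bookkeeping exercise that packages Proposition~\ref{basics}, a standard local-base union argument, Urysohn metrization, and the cited theorem. The only point requiring even minor care is verifying that the hypotheses needed for each step are indeed available, in particular that first-countability together with countability (rather than, say, second countability from the outset) is enough to reach metrizability.
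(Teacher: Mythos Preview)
Your proposal is correct and is essentially the same approach as the paper's: the paper's proof is the single sentence ``This follows directly from Proposition~\ref{basics} and Theorem~\ref{contributions},'' and your argument simply unpacks the implicit bridge between those two results (first-countable plus countable gives second-countable, hence metrizable by Urysohn, so Theorem~\ref{contributions} applies to any closed $A$).
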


\begin{proof} This follows directly from Proposition~\ref{basics} and Theorem~\ref{contributions}. \end{proof}

We now have some information about any compact totally disconnected $X$ cleavable over a separable LOTS $Y$, and this will help us prove the important lemmas of this section, Lemmas~\ref{partition} and \ref{less}. However, it should be explained why these lemmas are so significant.

We want to answer Question~\ref{q} in the affirmative. If we had an injective and continuous $f$ from $X$ to $Y$, we would have an immediate answer, as $f$ would be an embedding, and since $f(X)$ would be a closed subspace of a LOTS, $X$ would be a LOTS as well. In a very informal sense, there are two reasons why we may not be able to find an injective map from $X$ to $Y$. Either the topology on $X$ is too complicated for the elements of $X$ to be linearly ordered, or there isn't ``enough room'' in $Y$ to continuously and injectively map all of the points of $X$. What Lemmas~\ref{partition} and \ref{less} ensure is that for any single $y \in f(M_f)$, we may find a LOTS $\hat{Y}$ and a continuous $f: X \rightarrow \hat{Y}$ with enough room to accommodate the points of $f^{-1}(y)$. Since we will be assuming $M_f$ is scattered for some $f$, we will eventually be able to systematically repeat the method contained in Lemma~\ref{less} to find a $\hat{Y}$ that accommodates all points of $M_f$.

What Lemma~\ref{partition} actually does is strategically partitions $X$ so that, when we do find a $\hat{Y}$ with ``more room'' than $Y$ (Lemma~\ref{less} finds this $\hat{Y}$), our function from $X$ to $\hat{Y}$ is continuous. 

\begin{lemma} \label{partition} Let $X$ be a totally disconnected, first-countable, compact $T_2$ space, and $A$ a countable, closed subset of $X$ such that $A$ is homeomorphic to some countable ordinal $\lambda$. Let $h: A \rightarrow \lambda$ be a homeomorphism, and let $A = \left\{x_{\beta}: \beta < \lambda \right\}$, where $h(x_{\beta}) = \beta$. For each $\alpha \in \lambda \setminus \lambda^1$, we may then find clopen sets $U_{\alpha} \ni x_{\alpha}$ such that the following are satisfied: \begin{enumerate}

\item For $\alpha_1 \neq \alpha_2$, where $\alpha_1,\alpha_2 \in \lambda \setminus \lambda^1$, $U_{\alpha_1} \cap U_{\alpha_2} = \emptyset$.

\item Let $C \subseteq \lambda$ be clopen. Then $\bigcup_{\alpha \in (\lambda \setminus \lambda^1) \cap C} U_{\alpha} \cup \left\{x_{\beta}: \beta \in \lambda^1 \cap C\right\}$ is clopen.

\item $\bigcup_{\alpha \in \lambda \setminus {\lambda}^1} U_{\alpha} \cup \left\{x_{\beta}: \beta \in {\lambda}^1\right\} = X$. \end{enumerate} \end{lemma}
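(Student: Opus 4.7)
The plan is to reformulate conditions (1)--(3) as the existence of a continuous function $\phi: X \to \lambda$ with $\phi|_A = h$ and $\phi^{-1}(\lambda^1) = A^1$. Indeed, setting $U_\alpha := \phi^{-1}(\{\alpha\})$ for each $\alpha \in \lambda \setminus \lambda^1$, condition (1) holds because the fibers of $\phi$ are pairwise disjoint, condition (3) holds because $\phi$ is defined on all of $X$, and condition (2) is equivalent to continuity of $\phi$, since $\lambda$ has a clopen base and the union in (2) is precisely $\phi^{-1}(C)$. So the task reduces to extending $h$ to a continuous $\phi: X \to \lambda$ that sends $X \setminus A^1$ into $\lambda \setminus \lambda^1$.

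To construct $\phi$, I would enumerate the countable Boolean algebra of clopen subsets of $\lambda$ as $B_0, B_1, \ldots$ and build by recursion on $n$ clopen sets $\psi(B_n) \subseteq X$ such that $\psi(B_n) \cap A = h^{-1}(B_n)$ and the assignment respects finite Boolean operations. At each stage, the elements constructed so far determine a partition of $X$ into finitely many clopen atoms; to adjoin $B_{n+1}$ I would split each such atom into two clopen pieces according to whether they should lie in $\psi(B_{n+1})$, which is possible by zero-dimensionality of $X$ (disjoint closed subsets of $X$ can always be separated by a clopen). The resulting homomorphism determines a continuous $\phi: X \to \lambda$ extending $h$, and setting $U_\alpha := \psi(\{\alpha\})$ gives pairwise disjoint clopen sets satisfying (1) and (2).

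The main obstacle is condition (3): nothing in the Boolean algebra construction alone forces a point $x \in X \setminus A^1$ to land in some $\psi(\{\alpha\})$ for isolated $\alpha$, since a priori $\phi(x)$ could be a limit ordinal. To force this, I would use the observation that each $x \in X \setminus A^1$ has a clopen neighborhood $V_x$ disjoint from $A^1$ (by zero-dimensionality together with $A^1$ being closed), and then $h(V_x \cap A)$ is a clopen subset of $\lambda$ disjoint from $\lambda^1$, hence a finite set of isolated ordinals (any closed subset of the open set $\lambda \setminus \lambda^1$ is discrete and compact, thus finite). The strategy is then to interleave the enumeration of clopens of $\lambda$ with a countable family of such $V_x$'s covering $X \setminus A^1$, arranging the homomorphism so that each $V_x$ is contained in $\psi(h(V_x \cap A))$; this forces $x \in U_\alpha$ for some isolated $\alpha$ in the finite set $h(V_x \cap A)$. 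The trickiest step will be verifying that countably many $V_x$'s suffice to cover $X \setminus A^1$, for which I would exploit first-countability of $X$ at the countably many points of $A^1$ together with compactness of $X$ to produce a countable clopen cover of the open set $X \setminus A^1$.
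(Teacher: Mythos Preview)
Your reformulation---that conditions (1)--(3) amount to extending $h$ to a continuous $\phi:X\to\lambda$ with $\phi^{-1}(\lambda^1)=A^1$---is correct and elegant, and the Stone-duality style construction of $\phi$ via a countable back-and-forth on $\mathrm{Clop}(\lambda)$ is a genuinely different route from the paper's. The paper instead runs a direct transfinite induction on the Cantor--Bendixson rank of $A$: it isolates a single top-rank point $x_\lambda$, takes a nested clopen neighbourhood base $\{D_n\}$ at $x_\lambda$, and applies the inductive hypothesis to each clopen shell $D_n\setminus D_{n+1}$, checking property~(2) by hand on clopen intervals $[0,a]$. Your approach packages properties (1)--(3) conceptually and relocates all the combinatorics into the single extension problem; the paper's approach is more explicit about how the partition is built layer by layer.

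The one place your outline is thin is exactly where you flag it. To force $\phi(X\setminus A^1)\subseteq\lambda\setminus\lambda^1$ you need countably many clopen $V$'s, each disjoint from $A^1$, whose union is $X\setminus A^1$; equivalently, you need $A^1$ to be a $G_\delta$ in $X$. Your hint (``first-countability at the countably many points of $A^1$ together with compactness'') names the right ingredients, but they do not assemble into a one-line argument: taking a clopen base $\{W_{a,n}\}$ at each $a\in A^1$ and finite subcovers $A^1\subseteq\bigcup_{a\in F_n}W_{a,n}$ does not obviously give $\bigcap_n\bigcup_{a\in F_n}W_{a,n}=A^1$, since the finite sets $F_n$ vary with $n$. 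What does work is an induction on the Cantor--Bendixson rank of $A^1$: if $(A^1)^\alpha=\{c_1,\dots,c_k\}$ is the top (finite) layer, take disjoint decreasing clopen bases $W_{i,n}$ at the $c_i$ with $\bigcap_n W_{i,n}=\{c_i\}$, note that each $A^1\setminus\bigcup_i W_{i,n}$ has strictly smaller rank and is therefore $G_\delta$ by the inductive hypothesis, and conclude $A^1=\bigcap_n\bigl((\bigcup_i W_{i,n})\cup(A^1\setminus\bigcup_i W_{i,n})\bigr)$ is $G_\delta$. So your approach ultimately needs the same transfinite induction the paper uses, just applied to $A^1$ rather than to $A$; once that lemma is in hand, your Boolean-algebra construction goes through cleanly.
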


\begin{proof} We will prove this by transfinite induction on the Cantor-Bendixson rank of the last non-empty derived set of $A$.\\

\textbf{Base Case}: Let $A^0 = A$ be the last non-empty derived set of $A$. That is, all elements of $A$ are isolated in $A$. Since $A$ is assumed to be closed, we know there are finitely many elements in $A$, and since $X$ is totally disconnected and $T_2$, we know we may partition $X$ into finitely many clopen sets, each containing only one element of $A$. These clopen sets clearly satisfy all required properties.\\

\textbf{Successor Case}: It will be easiest to prove the theorem true for the case when the last non-empty derived set of $A$ has rank $1$, and then use this result to prove the general successor case. 

Let the last non-empty derived set of $A$ be $A^1$, and without loss of generality, assume there is only one element in $A^1$, namely $x_{\omega}$. Enumerate the elements of $A \setminus A^1$ as $x_n$, where $n \in \omega$. Since $X$ is first-countable and zero-dimensional, we may find a countable local base for $x_{\omega}$ consisting of nested clopen sets, $\left\{D_n: n \in \omega\right\}$, such that $D_0 = X$ and such that $\bigcap_{n \in \omega} D_n = \left\{x_{\omega}\right\}$. We may also require that $A \cap (X \setminus D_j) = \left\{x_k: k < j\right\}$; that is, $x_0 \in X \setminus D_1$, $x_0,x_1 \in X \setminus D_2$, and so on. Notice that for each $j$, $C_j = (X \setminus D_{j+1}) \setminus (X \setminus D_j)$ is a clopen set containing $x_j$ and no other elements of $A \setminus A^1$. Furthermore, $\bigcup_{n \in \omega} C_n \cup \left\{x_{\omega}\right\} = X$. These sets, $C_n$, obviously satisfy the theorem's requirements.

For the general successor case, let the last non-empty derived set of $A$ be $A^{\alpha+1}$, and without loss of generality, assume that $A^{\alpha+1}$ has only one element, $x_{\alpha+1}$. If we let $B = A^{\alpha}$, then $B^1$ must be the last non-empty derived set; we know, therefore, from the first part of the successor-case proof that we may partition $X$ in such a way that $X = \left\{x_{\alpha+1}\right\} \cup \bigcup_{n \in \omega} D_{x_{\alpha,n}}$, where each $D_{x_{\alpha,n}}$ is clopen, and contains exactly one element of $B\setminus B^1$, namely $x_{\alpha,n}$. When we consider this clopen set with respect to $A$ again, we notice that the rank of $A \cap D_{x_{\alpha,n}}$ is less than $\alpha+1$, and contains only one element of $A^{\alpha}$: $x_{\alpha,n}$. By the inductive hypothesis, we know we may partition each $D_{x_{\alpha,n}}$ in such a way that fits the theorem's requirements. But does the collective partitioning, the one in which we consider all clopen sets created from partitioning each $D_{x_{\alpha},n}$, satisfy the theorem's three requirements? It is obvious that this partition satisfies requirements $1$ and $3$. We must now check property $2$ is satisfied.

To clarify notation, consider $A \cap D_{x_{\alpha,0}}$. This set is homeomorphic to an ordinal $\delta+1$; in fact, all $A \cap D_{x_{\alpha,n}}$ are homeomorphic to $\delta+1$. When we use the inductive hypothesis to partition $D_{x_{\alpha,0}}$, based on $A \cap D_{x_{\alpha,0}}$, we will write the clopen subsets as $U_{\alpha}$, where $\alpha \in (\delta+1) \setminus (\delta+1)^1$. Now consider $D_{x_{\alpha,1}}$. When we use the inductive hypothesis to partition $D_{x_{\alpha,1}}$, based on $A \cap D_{x_{\alpha,1}}$, we will again write the clopen subsets as $U_{\alpha}$, but our $\alpha$ will now range over $[\delta+1,\delta \cdot 2 +1) \setminus ([\delta+1,\delta \cdot 2 +1))^1$. To generalize, when we partition $D_{x_{\alpha,n}}$ using the inductive hypothesis, we write our clopen sets as $U_{\alpha}$, where $\alpha \in [\delta \cdot n +1,\delta \cdot (n+1) +1) \setminus ([\delta \cdot n +1,\delta \cdot (n+1) +1))^1$. Lastly, let $\bigcup_{\alpha \in (\lambda \setminus \lambda^1) \cap C} U_{\alpha} \cup \left\{x_{\beta}: \beta \in \lambda^1 \cap C\right\}$, the set described in requirement $2$ of the theorem, be labeled $E_C$.

Let $\beta+1$ be the ordinal to which $A$ is homeomorphic (as $A$ is closed). Firstly notice that a single clopen interval, $[a+1,b]$, is the complement of $[0,a] \cup [b+1,\beta]$, both of which are clopen as well. (Note that $a$ may equal $0$, and if $b = \beta$, then $[b+1,\beta]$ will be empty.) Further, if $[c+1,\beta]$ is a clopen interval, then $[0,c]$ is a clopen interval. Lastly, all clopen subsets of $\beta+1$ are the union of at most finitely many clopen intervals. For these three reasons, to show requirement $2$ is satisfied for all clopen $C \subseteq \beta+1$, it is sufficient to show requirement $2$ of the theorem is satisfied whenever we take $C = [0,a]$, for some $a \in \beta+1$. 

Thus, let $C \subseteq \beta+1$ be equal to $[0,a]$. If $a \in \left\{0,\beta \right\}$, then $E_C$ is trivially clopen. Thus let $a \in [\delta \cdot m +1, \delta \cdot (m+1) + 1)$ for some $m \in \omega$. Then $E_C = \bigcup_{j<m} D_{x_{\alpha,j}} \cup \bigcup_{\mu \in ([\delta \cdot m +1,a]) \setminus ([\delta \cdot m + 1,a])^1} U_{\mu} \cup \left\{x_{\gamma}: \gamma \in ([\delta \cdot m +1,a])^1\right\}$. The set $\bigcup_{j<m} D_{x_{\alpha,j}}$ is clopen as it is the finite union of clopen sets, and $\bigcup_{\mu \in ([\delta \cdot m +1,a]) \setminus ([\delta \cdot m + 1,a])^1} U_{\mu} \cup \left\{x_{\gamma}: \gamma \in ([\delta \cdot m +1,a])^1\right\}$ is clopen by the inductive hypothesis. Therefore requirement $2$ is satisfied, and the successor case is proven.\\

\textbf{Limit Case}: Let the last non-empty derived set of $A$ have rank $\lambda$, let $\left\langle \alpha_n \right\rangle$ be a monotonically increasing sequence of successor ordinals converging to $\lambda$, and without loss of generality, let $A^{\lambda} = \left\{x_{\lambda}\right\}$. Let $x_{\alpha_n}$ be an element of $A$ with rank $\alpha_n$, and consider $B = \left\{x_{\alpha_n}: n \in \omega\right\} \cup \left\{x_{\lambda}\right\}$. This set is closed by compactness and first countability of $X$. Since $X$ is first-countable and zero-dimensional, we again know that there exists a countable local base for $x_{\lambda}$ consisting of nested clopen sets $\left\{D_n: n \in \omega\right\}$ such that $\bigcap_{n \in \omega} D_n = \left\{x_{\lambda}\right\}$, and such that $D_0 = X$. We may require this local base to be such that $B \cap X \setminus D_{x_{\alpha_{j+1}}} = x_{\alpha_0} \cup ... \cup x_{\alpha_j}$, for every $j+1 \in \omega$. 

Notice that for each $j+1$, $C_{j+1} = (X \setminus D_{j+1}) \setminus (X \setminus D_j)$ is a clopen set containing $x_{j+1}$ and no other elements of $B$. Furthermore, $\bigcup_{n \in \omega} C_n \cup \left\{x_{\omega}\right\} = X$. Since each $C_{j+1}$ is a clopen set, and $C_{j+1} \cap A$ has Cantor-Bendixson rank less than $\rank(A)$, we may use the inductive hypothesis to partition $C_{j+1}$ into sets that satisfy the requirements of the theorem. Now we must check that the collective partitioning satisfies for the limit case.

The first and third requirements are clearly met. The second requirement is proven in the exact same way as it was in the successor case.

This completes the proof. \end{proof}

\begin{lemma} \label{less} Let $X$ be a totally disconnected compact $T_2$ space cleavable over a separable LOTS $Y$, and $f: X \rightarrow Y$ a continuous function such that $M_f$ is countable. Then for every $x \in M_f$, there exists a separable LOTS $Y_1$ and a continuous function $f_1: X \rightarrow Y_1$ such that $x \notin M_{f_1}$, and such that $M_{f_1} \subset M_f$.\end{lemma}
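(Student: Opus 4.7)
The plan is to use Lemma~\ref{partition} to decompose $X$ into clopen pieces corresponding to the ordinal positions of $A = f^{-1}(f(x))$, and then construct $Y_1$ as a LOTS in which the point $y_0 = f(x) \in Y$ is replaced by an ``ordinal sum'' indexed by $\lambda \cong A$: a copy of $Y$ for each isolated position in $\lambda$ and a singleton for each limit position.

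I begin with the observation that $A$ is a closed subset of the compact first-countable space $X$, hence compact and first-countable. Because $x \in M_f$ forces $|A| \geq 2$, every point of $A$ has a non-trivial $f$-fibre, so $A \subseteq M_f$ and is therefore countable. A countable first-countable $T_2$ space is second-countable, and a compact second-countable $T_2$ space is metrizable, so Theorem~\ref{contributions} yields a homeomorphism $h : A \to \lambda$ for a countable ordinal $\lambda$; since $A$ is compact, $\lambda$ must be a non-zero successor ordinal. Write $A = \{x_\beta : \beta < \lambda\}$ with $h(x_\beta) = \beta$ and apply Lemma~\ref{partition} to obtain the clopen sets $\{U_\alpha\}_{\alpha \in \lambda \setminus \lambda^1}$.

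Next set $L = \{y \in Y : y < y_0\}$ and $R = \{y \in Y : y > y_0\}$. For each $\alpha \in \lambda \setminus \lambda^1$ introduce a disjoint copy $V_\alpha$ of $Y$, writing its points as $(\alpha, y)$, and for each $\beta \in \lambda^1$ introduce a fresh singleton $*_\beta$. Take
\[Y_1 = L \cup \bigcup_{\alpha \in \lambda \setminus \lambda^1} V_\alpha \cup \{*_\beta : \beta \in \lambda^1\} \cup R\]
with the linear order that keeps $L$ below and $R$ above the ``middle block'', orders the middle block by the $\lambda$-index, breaks ties inside each $V_\alpha$ by the order of $Y$, and retains the $Y$-order on $L$ and $R$. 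Equipped with the order topology, $Y_1$ is a LOTS, and separability of $Y$ together with countability of $\lambda$ yields a countable dense subset of $Y_1$. Define $f_1 : X \to Y_1$ by $f_1(z) = (\alpha, f(z))$ when $z \in U_\alpha$ for $\alpha \in \lambda \setminus \lambda^1$, and $f_1(x_\beta) = *_\beta$ when $\beta \in \lambda^1$; by property (3) of Lemma~\ref{partition} this is defined on all of $X$.

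The main obstacle I anticipate is proving continuity of $f_1$ at the limit points $x_\beta$ ($\beta \in \lambda^1$). Given a small basic open interval $(a, b) \ni *_\beta$ with both $a$ and $b$ inside the middle block, the preimage splits as a ``full'' clopen piece (the $U_\alpha$'s and singletons $\{x_\gamma\}$ whose $\lambda$-indices lie strictly between those of $a$ and $b$, which is clopen in $X$ by property (2) of Lemma~\ref{partition}) together with at most two ``trim'' open pieces obtained by intersecting the boundary $U_\alpha$'s with preimages under $f$ of open subsets of $Y$; this union is open and contains $x_\beta$, giving a neighborhood of $x_\beta$. Continuity at any $z \in U_\alpha$ is easier, since shrinking to a neighborhood of $(\alpha, f(z))$ inside $V_\alpha$ gives preimage $U_\alpha \cap f^{-1}(W)$ for $W$ a neighborhood of $f(z)$ in $Y$. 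Finally, $x \notin M_{f_1}$ and $M_{f_1} \subseteq M_f$ reduce to routine case analyses: writing $x = x_{\beta_0}$, the preimage of $f_1(x)$ equals $\{x_{\beta_0}\}$ in both the isolated and limit cases (using that $A \cap U_{\beta_0} = \{x_{\beta_0}\}$ when $\beta_0$ is isolated), and since distinct slots of the middle block are pairwise disjoint, any collision $f_1(z) = f_1(z')$ with $z \neq z'$ forces $z, z' \in U_\alpha$ with $f(z) = f(z')$, i.e.\ $z \in M_f$.
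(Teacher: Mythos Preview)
Your proof is correct and follows essentially the same route as the paper: apply Lemma~\ref{partition} to the fibre $A=f^{-1}(f(x))$, build $Y_1$ as a $\lambda$-indexed ordinal sum with a copy of (a piece of) $Y$ at each isolated position and a singleton at each limit position, define $f_1$ piecewise, and verify continuity at the limit points $x_\beta$ via property~(2). The only cosmetic differences are that the paper inserts $f(U_\alpha)$ rather than a full copy of $Y$ at position~$\alpha$ and omits your unused flanking pieces $L,R$; neither affects the argument.
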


\begin{proof} We will be relying on the notation used in Theorem~\ref{partition}. Let $x \in M_f$, let $y = f(x)$, and let $A = f^{-1}(y)$. Let $\lambda$ be the ordinal to which $A$ is homeomorphic, where $h: A \rightarrow \lambda$ is a homeomorphism, and enumerate the points of $A$ to be $x_{\alpha}$ such that $h(x_{\alpha}) = \alpha$. (It does not matter for the sake of the proof which $x_{\alpha}$ is equal to our original $x$.) Partition $X$ as described in Theorem~\ref{partition}, with $U_{\alpha} \ni x_{\alpha}$. Let $Y_1$ be $\lambda$ with all of the isolated ordinals $\alpha$ replaced with $f(U_{\alpha})$. Let the order on $Y_1$ preserve the order between the ordinals, and preserve the order already on $f(U_{\alpha}) \subset Y$. Let $g_{\alpha}: U_{\alpha} \rightarrow f(U_{\alpha}) \subset Y_1$ be identical to $f|_{U_{\alpha}}$, and let $h': A^1 \rightarrow \lambda^1 \subset Y_1$ be such that $h'(x_{\alpha}) = \alpha$. The functions $g_{\alpha}$ and $h'$ are clearly continuous. We first claim that there exists a continuous function $f_1: X \rightarrow Y_1$. 

Let $f_1$ be defined as: $$f_1(x) = \begin{cases}
g_{\alpha}(x) & x \in U_{\alpha}\\
h'(x) & x \in A^1\\ \end{cases}$$

First note that each $f_1(U_{\alpha})$ is clopen in $Y_1$. To show this function is continuous, let $V$ be an open set in $Y_1$. For every $y$ such that $y \in V \cap f_1(U_{\alpha})$ for some $\alpha$, we know there exists an open set in $f_1^{-1}(V)$ containing $x = f_1^{-1}(y)$, namely $f_1^{-1}(V \cap f(U_{\alpha}))$. (This is true by continuity of $f_1$ on $U_{\alpha}$.) If, however, for some $y \in V$, $y = h'(x_{\delta})$ for some $\delta \in \lambda^1$, then by construction we may find an ordinal $\gamma \in \lambda$ such that if $B = \left\{\alpha \in [\gamma,\delta): f(U_{\alpha}) \subset V \right\}$, and $C = \left\{\beta \in [\gamma, \delta)^1: f_1(x_{\beta}) \in V\right\}$, then $B \cup C \cup \left\{\delta\right\}$ is a clopen set of ordinals. This implies, by property two of Theorem~\ref{partition}, that $\bigcup_{\alpha \in B} U_{\alpha} \cup \left\{x_{\beta}: \beta \in C\right\} \cup \left\{x_{\delta}\right\}$ is a clopen set in $X$ containing $f_1^{-1}(y) = x_{\delta}$, contained in $f_1^{-1}(V)$. Therefore $f_1$ is continuous, $M_{f_1} \subset M_f$, and $x \notin M_{f_1}$.

Since $f_1$ is a continuous function from a separable space $X$ onto a LOTS $Y_1$, we know $Y_1$ must be separable. \end{proof}

We now have two powerful lemmas we may use to prove $X$ is a LOTS. The following two lemmas ensure that if $M_f$ is scattered in $X$ for some continuous $f$, then the points of $f(M_f)$ behave in $Y$ well enough for us to systematically implement the methods contained in Lemmas~\ref{partition} and \ref{less}.

\begin{lemma} Let $X$ be a totally disconnected, compact $T_2$ space cleavable over a totally disconnected separable LOTS $Y$. If there exists a continuous $f: X \rightarrow Y$ such that $M_f$ is scattered, then $f(M_f)$ is scattered in $Y$. \end{lemma}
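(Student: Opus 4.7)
The plan is to establish by transfinite induction on $\alpha$ that
\[
(f(M_f))^{\alpha} \;\subseteq\; f(M_f^{\alpha})
\]
for every ordinal $\alpha$, where both derivatives are Cantor--Bendixson derivatives computed in the respective subspace topologies. Once this inclusion is available, the scatteredness of $M_f$ gives $M_f^{\alpha} = \emptyset$ for some $\alpha$, and the inclusion then forces $(f(M_f))^{\alpha} = \emptyset$, proving that $f(M_f)$ is scattered.

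Two ingredients should be recorded up front. The \emph{fiber observation}: if $y \in f(M_f)$, then $|f^{-1}(y)| \geq 2$ and so every element of $f^{-1}(y)$ lies in $M_f$; the fiber is closed in the compact space $X$, and is thus a compact subset already contained in $M_f$. The \emph{closedness observation}: each intrinsic derivative $M_f^{\alpha}$ is closed in $M_f$, because $(M_f^{\alpha})^{1}$ is closed in $M_f^{\alpha}$ (derived sets are closed in their own space), and an easy transfinite induction (with intersections at limit ordinals) propagates closedness through the hierarchy.

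The base case $\alpha = 0$ is immediate. For the successor step, assume $(f(M_f))^{\alpha} \subseteq f(M_f^{\alpha})$ and pick $y \in (f(M_f))^{\alpha+1}$. Using first-countability of the separable LOTS $Y$, choose a sequence $y_n \in (f(M_f))^{\alpha} \setminus \{y\}$ with $y_n \to y$, and lift through the inductive hypothesis to $x_n \in M_f^{\alpha}$ with $f(x_n) = y_n$. Compactness of $X$ yields a convergent subsequence $x_{n_k} \to x^{*}$; by continuity $f(x^{*}) = y \in f(M_f)$, so the fiber observation places $x^{*} \in M_f$. The limit $x^{*}$ lies in the ambient closure of $M_f^{\alpha}$, and the closedness observation then gives $x^{*} \in M_f^{\alpha}$; since $f(x_{n_k}) \neq f(x^{*})$ forces $x_{n_k} \neq x^{*}$, the point $x^{*}$ is a limit point of $M_f^{\alpha}$ inside $M_f^{\alpha}$, so $x^{*} \in M_f^{\alpha+1}$ and $y \in f(M_f^{\alpha+1})$. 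The limit step is a finite-intersection argument on the fiber: for $y \in \bigcap_{\beta < \lambda}(f(M_f))^{\beta}$, the sets $f^{-1}(y) \cap M_f^{\beta}$ are non-empty (by the inductive hypothesis), closed in the compact fiber $f^{-1}(y)$, and decreasing in $\beta$, so their intersection is non-empty and any point of it lies in $f^{-1}(y) \cap M_f^{\lambda}$, yielding $y \in f(M_f^{\lambda})$.

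The hard part is pinning the subsequential limit $x^{*}$ inside $M_f^{\alpha}$ in the successor step: $M_f^{\alpha}$ is closed in $M_f$ but typically not in $X$, so a limit of $M_f^{\alpha}$-points in the ambient space could \emph{a priori} escape $M_f$ entirely. The fiber observation is precisely what rescues the argument, forcing $x^{*} \in f^{-1}(y) \subseteq M_f$ before the closure argument is invoked; after that, the ``closed in $M_f$'' property of $M_f^{\alpha}$ suffices. Note that the totally disconnected and cleavability hypotheses play no explicit role in this proof; they are simply the ambient framework in which the lemma will be applied.
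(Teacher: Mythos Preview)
Your proof is correct and in fact more streamlined than the paper's, but the two arguments are genuinely different in structure. The paper argues by contradiction: assuming $f(M_f)$ has a dense-in-itself part, it performs a \emph{rank-descent} inside $f(X)$. At each stage one takes a point $y$ of $f(M_f)$, picks the fiber element of maximal Cantor--Bendixson rank in $M_f$, and uses that $f(X)$ is a totally disconnected (hence zero-dimensional) compact LOTS to find a clopen neighborhood of $y$ whose preimage excludes $M_f$-points of rank at least that maximum. Iterating produces a strictly decreasing sequence of ordinal ranks, which must terminate at a point $y_j$ whose entire fiber is isolated in $M_f$; continuity and sequential compactness then force $y_j$ to be isolated in $f(M_f)$, contradicting density-in-itself.

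Your route, by contrast, is a direct transfinite induction establishing $(f(M_f))^{\alpha}\subseteq f(M_f^{\alpha})$, using only continuity, sequential compactness of $X$, first countability of $Y$, and compactness of fibers. It never invokes the totally disconnected or cleavability hypotheses, and as a byproduct yields the quantitative inclusion of derived sets (hence $\rank(f(M_f))\leq\rank(M_f)$), which is a bit more than the paper states. The paper's argument, on the other hand, fits the ambient zero-dimensional machinery used throughout Section~2 and does not need the slightly delicate ``$x^{*}$ lands back in $M_f$ via the fiber observation'' step. Your remark that the extra hypotheses play no role in this lemma is accurate.
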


\begin{proof} Let $f: X \rightarrow Y$ be the continuous function such that $M_f$ is scattered, and consider $f(M_f)$. Assume for a contradiction, and without loss of generality, that $f(M_f)$ is dense in itself. Let $y \in f(M_f)$, and consider $f^{-1}(y)$. This set contains an element $x$ such that for every other $x' \in f^{-1}(y)$, $\rank(x') \leq \rank(x)$; the rank we are referring to here and for the rest of this proof is its rank with respect to $M_f$. Since $f(X)$ is totally disconnected and zero-dimensional, we know there exists a clopen set $V_1$ containing $y$ such that $f^{-1}(V_1)$ does not contain any elements of $M_f$ whose rank is greater than or equal to the rank of $x$. (Otherwise by continuity there would exist a point $\hat{x} \in f^{-1}(y)$ such that $\rank(\hat{x}) > \rank(x)$.) Let $y_1 \in f(M_f) \cap (V_1 \setminus \left\{y\right\})$, and consider $f^{-1}(y_1)$. We again know that there exists an element $x_1 \in f^{-1}(y_1)$ such that for every other $x' \in f^{-1}(y_1)$, $\rank(x') \leq \rank(x_1)$. Since $f(X)$ is totally disconnected and zero-dimensional, we know there exists a clopen set $V_2 \subset V_1$ containing $y_1$ such that $f^{-1}(V_2)$ does not contain any elements of $M_f$ whose rank is greater than or equal to the rank of $x_1$. Note that from this process we are creating a decreasing sequence of ordinals (namely, the rank of each $x_n$). Since this sequence must be finite, we know for some step in this process, we will get to the point where $y_{j} \in M_f \cap (V_j \setminus \left\{y,y_1,...,y_{j-1}\right\})$ is such that the greatest rank of the elements in $f^{-1}(y_j)$ is $1$. That is, the points of $f^{-1}(y_j)$ must be isolated in $M_f$. By continuity of $f$, and the fact that $X$ is both limit point and sequentially compact, $y_j$ must be isolated in $f(M_f)$, a contradiction. Therefore $f(M_f)$ must be scattered. \end{proof}

\begin{lemma} Let $X$ be a totally disconnected, compact $T_2$ space cleavable over a totally disconnected separable LOTS $Y$. If there exists a continuous $f: X \rightarrow Y$ such that $M_f$ is scattered, then the rank of $f(M_f)$ must be less than $\omega_1$. \end{lemma}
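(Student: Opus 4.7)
The plan is to argue by contradiction, combining scatteredness of $f(M_f)$ with the hereditary Lindelöfness of the ambient separable LOTS. Suppose $\rank(f(M_f)) \geq \omega_1$. For each $y \in f(M_f)$, let $\beta(y)$ denote the unique ordinal with $y \in (f(M_f))^{\beta(y)} \setminus (f(M_f))^{\beta(y)+1}$; in the paper's convention $\rank(y) = \beta(y)+1$, and $y$ is isolated in $(f(M_f))^{\beta(y)}$. Because $Y$ is a totally disconnected LOTS, I can pick a clopen set $V_y \subseteq Y$ with $y \in V_y$ and $V_y \cap (f(M_f))^{\beta(y)} = \{y\}$. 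The family $\{V_y \cap f(M_f) : y \in f(M_f)\}$ is then an open cover of $f(M_f)$.

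The key input is hereditary Lindelöfness of $Y$. A separable LOTS is perfectly normal: with a countable dense $D \subseteq Y$, every open $U \subseteq Y$ is a countable (by ccc) disjoint union of maximal open intervals, and each such interval is in turn a countable union of closed intervals with endpoints in $D$; hence every open set is $F_\sigma$ and every closed set is $G_\delta$, and together with the order structure this gives hereditary Lindelöfness. Consequently $f(M_f)$ is Lindelöf, and from the cover above I extract a countable subcover $\{V_{y_n} \cap f(M_f)\}_{n < \omega}$; set $\mu = \sup_n (\beta(y_n) + 1)$, a countable ordinal.

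Finally I verify $\beta(y) < \mu$ for every $y \in f(M_f)$. Pick $n$ with $y \in V_{y_n}$. If $\beta(y) \geq \beta(y_n)$, then $y \in (f(M_f))^{\beta(y_n)} \cap V_{y_n} = \{y_n\}$, forcing $y = y_n$ and $\beta(y) = \beta(y_n) < \mu$; otherwise $\beta(y) < \beta(y_n) < \mu$ is automatic. Hence $(f(M_f))^{\mu} = \emptyset$, so $\rank(f(M_f)) \leq \mu < \omega_1$, contradicting the assumption.

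The only genuinely nontrivial step is the hereditary Lindelöfness of the separable LOTS $Y$ (equivalently, Lindelöfness of $f(M_f)$); this is the main obstacle, and I would either cite the classical theorem or include the brief perfect-normality verification above. The remainder of the argument—extracting clopen isolating neighborhoods from zero-dimensionality together with scatteredness, and then bounding the depth function through a countable subcover—is entirely routine and parallels the flavor of the preceding lemma.
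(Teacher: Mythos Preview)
Your argument is correct, and it takes a genuinely different route from the paper's.

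The paper argues by contradiction in a much more hands-on way: assuming $\rank(f(M_f))=\omega_1$, it introduces the closed set $A\subseteq f(X)$ of points $y$ with $(f(M_f)\cup\{y\})^{\omega_1}=\{y\}$, uses compactness of $f(X)$ to see $A\neq\emptyset$, looks at a maximal open interval of $f(X)\setminus A$, splits it at a gap (total disconnectedness), and partitions one half into countably many clopen pieces $V_m$. One $V_m$ then carries an uncountable portion of $f(M_f)$ of rank $<\omega_1$, so some single Cantor--Bendixson level $(V_m\cap f(M_f))^{\beta}\setminus(V_m\cap f(M_f))^{\beta+1}$ is uncountable; isolating its points yields uncountably many disjoint open intervals, contradicting separability (ccc). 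In other words, the paper never invokes hereditary Lindel\"ofness as a black box but instead reproves the special case it needs directly from ccc.

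Your approach trades that concrete construction for a single structural fact: a separable LOTS is hereditarily Lindel\"of, hence the clopen ``level-isolating'' cover $\{V_y\}$ of the scattered set $f(M_f)$ has a countable subcover, and the supremum of the finitely many levels involved bounds the rank. This is shorter and more conceptual, and it makes transparent that the result is really a statement about scattered subsets of hereditarily Lindel\"of spaces. The cost is that you must either cite the hereditary Lindel\"of theorem or prove it; your perfect-normality sketch is on the right track (every open interval in a separable LOTS is indeed $F_\sigma$ by the case analysis on whether the endpoints have immediate neighbours), though the step ``perfect $+$ order structure $\Rightarrow$ hereditarily Lindel\"of'' deserves one more sentence or a reference (e.g.\ to Lutzer's survey on ordered spaces). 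Note also that you are tacitly using the preceding lemma---that $f(M_f)$ is scattered---when you define $\beta(y)$; it would be worth making that dependence explicit.
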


\begin{proof} Assume for a contradiction that $\rank(f(M_f)) = \omega_1$. (If $\rank(f(M_f))> \omega_1$, just take an appropriate subspace.) Let $A$ be the set of elements $y \in f(X)$ such that $(f(M_f) \cup \left\{y\right\})^{\omega_1} = \left\{y\right\}$. We know $A$ is non-empty since $f(X)$ is compact; furthermore, $A$ is sequentially closed, and since $f(X)$ is first-countable, $A$ is a closed subset of $f(X)$. Note that $A \subset f(X) \setminus f(M_f)$.

Take $f(X) \setminus A$. This is open, and therefore made up of the union of open intervals. Since $f(X)$ is a compact LOTS, we know these open intervals are maximal. Consider one $(a,b)$ such that $(a,b) \cap f(M_f) \neq \emptyset$, and such that $a$ and $b$ are both in $A$. Since this is a non-empty open interval, and $f(X)$ is compact and totally disconnected, there exist gap points $c_L$ and $c_R$ such that $(a,b) = (a,c_L] \cup [c_R,b)$. Assume that $([c_,b) \cup \left\{b\right\})^{\omega} = \left\{b\right\}$. As $[c_R,b]$ is a totally disconnected compact separable LOTS, we may partition $[c_R,b)$ into countably many disjoint clopen intervals $V_m$, $m \in \omega$, such that $\bigcup_{m \in \omega} V_m = [c_R,b)$. 

Now since $b \in A$, we know that $[c_R,b) \cap f(M_f)$ is uncountable. Thus for at least one $m \in \omega$, $V_m \cap f(M_f)$ is uncountable as well. But by construction, $\rank(V_m \cap f(M_f)) < \omega_1$. Therefore, for some $\beta < \rank(V_m \cap f(M_f))$, $(V_m \cap f(M_f))^{\beta} \setminus (V_m \cap f(M_f))^{\beta+1}$ is uncountable as well. Around each point $x \in (V_m \cap f(M_f))^{\beta} \setminus (V_m \cap f(M_f))^{\beta+1}$ we may take an open interval around $x$, containing no other point of $(V_m \cap f(M_f))^{\beta} \setminus (V_m \cap f(M_f))^{\beta+1}$, thereby creating uncountably many disjoint open intervals, which contradicts the fact that $f(X)$ is separable.

Thus the rank of $f(M_f)$ must be less than $\omega_1$. \end{proof}

We are now ready to prove the main theorem of this section. Note that while we are assuming $Y$ is totally disconnected for now, we will be able to drop this assumption and prove the theorem holds for any separable $Y$.

\begin{thm} \label{scattered} Let $X$ be a totally disconnected, compact $T_2$ space cleavable over a totally disconnected separable LOTS $Y$. If there exists a continuous $f: X \rightarrow Y$ such that $M_f$ is scattered, then $X$ is a LOTS. \end{thm}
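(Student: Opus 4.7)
The plan is to construct a separable totally disconnected LOTS $\hat{Y}$ and a continuous injection $\hat{f} : X \to \hat{Y}$. Being a continuous injection from a compact space into a Hausdorff space, $\hat{f}$ is automatically a closed embedding, so $X$ is homeomorphic to the closed subspace $\hat{f}(X)$ of $\hat{Y}$; as closed subspaces of LOTS inherit an order making them LOTS, $X$ itself is a LOTS.

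To build $(\hat{Y}, \hat{f})$, I would carry out a transfinite recursion on the scattered structure of $f(M_f)$: well-order the points of $f(M_f)$ by decreasing Cantor--Bendixson rank (breaking ties arbitrarily), start with $(Y_0, f_0) = (Y, f)$, and at each successor stage $\alpha + 1$ apply the construction of Lemma~\ref{less} to the next targeted point $y_\alpha$, producing $(Y_{\alpha+1}, f_{\alpha+1})$ with $f_\alpha^{-1}(y_\alpha) \cap M_{f_{\alpha+1}} = \emptyset$. At limit stages, take a direct limit in the category of separable totally disconnected LOTS's, gluing the successive partitions supplied by Lemma~\ref{partition}. Since $\rank(f(M_f)) < \omega_1$ by the preceding lemma and each successor stage strictly decreases the remaining set of non-injective fibers in a well-founded sense, the recursion terminates at some countable ordinal with an injective $f_{\hat{\alpha}}$, which serves as $\hat{f}$. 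Equivalently, one may phrase the argument as a transfinite induction on $\beta = \rank(f(M_f))$: the base case $\beta = 0$ gives $f$ already injective, and in the step one invokes the above construction to reduce to a smaller rank, then applies the inductive hypothesis.

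I expect two main obstacles. First, Lemma~\ref{less} is stated only for countable $M_f$, whereas here $M_f$ need only be scattered; the workaround is that the construction inside Lemma~\ref{less} only requires the single fiber $f^{-1}(y)$ to be homeomorphic to a countable ordinal, and this fiber (being a compact, first-countable, scattered, totally disconnected $T_2$ subspace of $X$) is metrizable and hence countable by Theorem~\ref{contributions}. Moreover, cleavability of $X$ over the new LOTS $Y_{\alpha+1}$ is not automatic, so the induction should be phrased using the intrinsic properties of $X$ from Proposition~\ref{basics} (compact, separable, first-countable, totally disconnected, $T_2$), which are preserved throughout the construction. The second and more substantial obstacle is the limit-stage bookkeeping: verifying that the direct limit of the $Y_\alpha$'s remains a separable totally disconnected LOTS and that the limiting map $X \to Y_\lambda$ is continuous. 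This is precisely where the care built into property (2) of Lemma~\ref{partition} should pay off, ensuring that the partitions of $X$ constructed at successive stages nest compatibly so that clopen sets at stage $\lambda$ pull back to clopen sets in $X$.
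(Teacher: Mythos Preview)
Your framework of transfinite induction on $\rank\bigl(f(M_f)\bigr)$ is exactly what the paper uses, and your observation that Lemma~\ref{less} only needs the single fiber $f^{-1}(y)$ to be a countable compact ordinal is correct. Where you diverge from the paper is in the inductive step, and there the divergence matters.

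You propose to run through the points of $f(M_f)$ one at a time, applying Lemma~\ref{less} at successor stages and forming a direct limit of the $Y_\alpha$'s at limit stages. You flag the limit stage as the hard part, and you are right: making sense of ``the direct limit in the category of separable totally disconnected LOTS's'' and showing the limiting map $X\to Y_\lambda$ is continuous is not a bookkeeping exercise---it is the entire content of the theorem at limit ranks, and nothing in Lemma~\ref{partition} as stated gives you compatible nesting of partitions across different fibers. Property~(2) of Lemma~\ref{partition} controls a \emph{single} fiber; it says nothing about how the partitions built for distinct $y_\alpha$'s interact. So as written, the limit step is a genuine gap.

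The paper avoids direct limits altogether by a spatial rather than temporal decomposition. At each rank, one works in $Y$, not in the sequence of stages: since $Y$ is totally disconnected and the top-rank points of $f(M_f)$ (or, in the limit-rank case, the accumulation set $A\subset f(X)\setminus f(M_f)$) form a closed nowhere-dense set, one can cover $f(M_f)$ by countably many pairwise disjoint clopen intervals $V_j\subset f(X)$, each meeting $f(M_f)$ in a set of strictly smaller rank. The inductive hypothesis then applies to each clopen preimage $f^{-1}(V_j)$ (which is again compact, totally disconnected, $T_2$, and cleavable over $Y$), yielding an injection into some LOTS $\hat V_j$. One then replaces each $V_j$ in $Y$ by $\hat V_j$ and patches the maps together; continuity is immediate because the $V_j$ are clopen in $Y$ and hence the $f^{-1}(V_j)$ are clopen in $X$. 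After this single step the residual $M_g$ has rank at most~$1$, and Lemma~\ref{less} finishes the job fiber by fiber with no limit to take. The point is that total disconnectedness of $Y$ lets you cut the problem into finitely-or-countably many independent clopen pieces of lower rank, so the induction closes without ever assembling an infinite tower of modifications.
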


\begin{proof} We will prove this by transfinite induction on the Cantor-Bendixson rank of $f(M_f)$.\\

\textbf{Base Case}: If the rank of $f(M_f)$ is $0$, then it is empty, and the theorem is true vacuously. It would be useful, however, to exhibit the proof for the case where $\rank(f(M_f)) = 1$. Therefore let $f(M_f)^0$ be the last non-empty derived set of $f(M_f)$. Enumerate the elements of $f(M_f)$ as $y_j$, where $j \in v \subseteq \omega$. Let $U_j$ be a clopen set containing $y_j$ and no other $y_k$ for $k \neq j$. By Lemmas~\ref{partition} and \ref{less} we know we may find a LOTS $\hat{U}_j$ and a continuous function $f_j: f^{-1}(U_j) \rightarrow \hat{U}_j$ such that $M_{f_j} = \emptyset$. Let $\hat{Y} = Y$ with each $U_j$ replaced with $\hat{U}_j$, let $\hat{f}_j: f^{-1}(U_j) \rightarrow \hat{U}_j \subseteq \hat{Y}$ be identical to the function $f_j$, let $\hat{f}: X \setminus \bigcup_{j \in v} f^{-1}(U_j) \rightarrow \hat{Y}$ be identical to $f$, (note that $X \setminus \bigcup_{j \in v} f^{-1}(U_j)$ may be empty), and let $g: X \rightarrow \hat{Y}$ be defined as the following: $$g(x) = \begin{cases}
\hat{f}(x) & x \in X \setminus \bigcup_{j \in v} f^{-1}(U_j)\\
\hat{f}_j(x) & x \in f^{-1}(U_j) \end{cases}$$

Then $g$ is continuous, $M_g$ is empty, and since $g$ is an injective continuous function from $X$ to $\hat{Y}$, $X$ is a LOTS.\\

\textbf{Successor Case}: There are two successor cases to consider: when $\rank(f(M_f)) = \alpha+1$, where $\alpha$ is a successor ordinal, and when $\rank(f(M_f)) = \lambda +1$, where $\lambda$ is a limit ordinal. First assume $\rank(f(M_f)) = \alpha+1$, where $\alpha$ is a successor ordinal, and assume it is true that if the rank of $f(M_f)$ is $\alpha$, then $X$ is a LOTS. Since $\alpha$ is a successor ordinal, let it be equal to $\beta +1$. Enumerate the elements of $f(M_f)^{\alpha}$ as $y_j$, where $j \in v \subseteq \omega$. Since $f(X)$ is a totally disconnected LOTS, we may find clopen intervals $U_j$ containing $y_j$ and no other $y_k$ for $k \neq j$ such that $f(M_f) \subseteq \bigcup_{j \in v} U_j$. Now consider a single $U_j$. This clopen set has only one element of $f(M_f)^{\alpha} = f(M_f)^{\beta+1}$; label it $x_{\alpha,j}$. Enumerate the points of $(f(M_f)^{\beta} \cap U_j)  \setminus \left\{x_{\alpha,j}\right\}$ as $y_{j,m}$, where $m \in \omega$. Since $U_j$ is also a totally disconnected compact LOTS, we may find clopen intervals $V_{j,m} \subset U_j$ containing $y_{j,m}$ and no other $y_{j,k}$ for $k \neq m$. The rank of each $V_{j,m} \cap f(M_f)$ is $\beta+1 = \alpha$, therefore by the inductive hypothesis, we may find a clopen LOTS $\hat{V}_{j,m}$ and find a continuous function $f_{j,m}: f^{-1}(V_{j,m}) \rightarrow \hat{V}_{j,m}$ such that $M_{f_{j,m}} = \emptyset$. Let $Y_1$ be $Y$ with each $V_{j,m}$ replaced with $\hat{V}_{j,m}$. We may then find a continuous function $g_1$ from $X$ to $Y_1$ such that $M_{g_1} = f^{-1}(f(M_f)^{\alpha})$. We are now left with a situation where $\rank(g_1(M_{g_1})) = 1$. Therefore we know we may find a LOTS $Y_2$ and a continuous function $g_2: X \rightarrow Y_2$ such that $M_{g_2}= \emptyset$, making $X$ a LOTS.

Now let $\rank(f(M_f)) = \lambda +1$, where $\lambda$ is a limit ordinal, and assume we have shown that if the rank of $f(M_f)$ is $\alpha$, for $\alpha < \lambda+1$, then $X$ is a LOTS. Enumerate the elements of $f(M_f)^{\lambda}$ as $y_j$, where $j \in v \subseteq \omega$. Since $f(X)$ is a totally disconnected LOTS, we may find clopen intervals $U_j$ containing $y_j$ and no other $y_k$ for $k \neq j$ such that $f(M_f) \subseteq \bigcup_{j \in v} U_j$. Now consider $U_j$. This clopen set has only one element of $f(M_f)^{\lambda}$, namely $y_j$. For some $\alpha+1 < \lambda$, we know there exists a sequence of elements $\left\langle y_{j,n} \right\rangle$ contained in $(f(M_f)^{\alpha+1} \setminus f(M_f)^{\alpha+2}) \cap U$ converging to $y_j$. Since $U_j$ is a totally disconnected LOTS, and a clopen interval, we know we may partition $U_j$ into countably many disjoint clopen intervals, $V_{j,n}$, such that $y_{j,n} \in V_{j,n}$ for every $n \in \omega$, and $\bigcup_{n \in \omega} V_{j,n} \cup \left\{y_j \right\} = U_j$. Each $f(M_f) \cap V_{j,n}$ must have Cantor-Bendixson rank less than $\lambda+1$, therefore we know by the inductive hypothesis that we may find a LOTS $\hat{V}_{j,n}$ and a continuous function $g_{j,n}: f^{-1}(V_{j,n}) \rightarrow \hat{V}_{j,n}$ such that $M_{g_{j,n}} = \emptyset$. Let $Y_1$ be $Y$ with each $V_{j,n}$ replaced by $\hat{V}_{j,n}$, for every $n \in \omega$, and for every $j \in v$; let $g: X \rightarrow Y$ be equal to $f$ on $X \setminus (\bigcup_{j \in v} U_j \setminus \left\{y_j: j \in v \right\})$ (if it is not empty), and equal to $g_{j,n}$ on each $V_{j,n}$. Then $M_g = \left\{y_j: j \in v \right\}$, and we are now left with a situation in which the rank of $f(M_g) = 1$. Therefore we know $X$ is a LOTS.\\

\textbf{Limit Case}: Let the Cantor-Bendixson rank of $f(M_f)$ be equal to $\gamma$, where $\gamma$ is a limit ordinal, and assume we have shown that if $\rank(f(M_f)) < \gamma$, then $X$ is a LOTS.  

Let $A$ be the set of $x \in f(X) \setminus f(M_f)$ such that there exists a sequence of elements in $f(M_f)$, $\left\langle x_n \right\rangle$, converging to $x$, where $\sup{\left\{\rank(x_n)\right\}} = \gamma$. We know that $A$ is non-empty as $f(X)$ is compact, first-countable, and therefore sequentially compact. We also know from the way we have defined $A$, that it is sequentially closed. Thus $A$ is closed, as in a first-countable space, every sequentially closed subset is closed.  Also notice that $A$ is nowhere dense in $f(X)$.

Take $f(X) \setminus A$. This is open, and therefore made up of the union of open intervals. Since $f(X)$ is a compact LOTS, we know these open intervals are maximal. The set $f(M_f)$ must be contained within $X \setminus A$, and since $f(X)$ is separable, we know there may only be countably many of these maximal open intervals. Enumerate them as $(a_n,b_n)$, where $n \in v \subseteq \omega$.

Take a single maximal open interval, $(a_m,b_m)$. Assume both $a_m$ and $b_m$ are elements of $A$, and that the sequences that qualify $a_m$ and $b_m$ to be elements of $A$ are both contained within this open interval. Since this is a non-empty open interval, and $f(X)$ is compact and totally disconnected, there exist gap points $c_L$ and $c_R$ such that $(a_m,b_m) = (a_m,c_L] \cup [c_R,b_m)$. 

Now take $[c_R,b_m)$, and let $\left\langle y_{m,n} \right\rangle$ be a sequence contained in $[c_R,b_m) \cap f(M_f)$ that converges only to $b_m$. As $[c_R,b_m]$ is a totally disconnected, compact LOTS, we may partition $[c_R,b_m)$ into clopen intervals $V_{m,n}$, $n \in \omega$, such that $V_{m,n}$ contains $y_{m,n}$, and such that $\bigcup_{n \in \omega} V_{m,n} = [c_R,b_m)$. By construction, each $V_{m,n} \cap f(M_f)$ has Cantor-Bendixson rank less than $\gamma$. Therefore by the inductive hypothesis, we know there exists a LOTS $\hat{V}_{m,n}$ and a continuous function $f_{m,n}: f^{-1}(V_{m,n}) \rightarrow \hat{V}_{m,n}$ such that $M_{f_{m,n}} = \emptyset$. Let $Y_1$ be $Y$ with each $V_{m,n}$ replaced by $\hat{V}_{m,n}$, for every $m \in v$, $n \in \omega$. Then there exists a continuous function $g_1: X \rightarrow Y_1$, composed piecewise of functions identical to $f_{m,n}$, for all $m \in v$, $n \in \omega$, such that $M_{g_1} = \emptyset$, proving $X$ is a LOTS. \end{proof}

We have now proved that $X$ is a LOTS when $Y$ is totally disconnected, but what if $Y$ is not totally disconnected? The following lemma allows us to find a totally disconnected, separable LOTS $\hat{Y}$ that we may use instead of $Y$ in order to complete the proof of the theorem. Note that this lemma also looks very similar to Lemma~\ref{less}. While the statements differ only slightly, they are actually exhibiting very different properties. What Lemma~\ref{less} showed is that we may enlarge $Y$ so that $f$ is injective on more points of $X$; Lemma~\ref{td} shows that we may find a totally disconnected separable $\hat{Y}$ and a continuous $\hat{f}: X \rightarrow \hat{Y}$ such that we may implement Theorem~\ref{scattered} to show $X$ is a LOTS. To begin, we must state a new definition, and three introductory theorems.

\begin{definition} Let $X$ be a topological space, $Y = [a,b]$ a LOTS, and let $f$ be a continuous function from $X$ to $Y$. We say a point $y \in Y$ can be \textbf{separated} if there exists $x_1, x_2 \in X$ such that $f(x_1) = f(x_2) = y$, a space $\hat{Y} = [a,y_L] \cup [y_R,b]$, and a continuous function $g: X \rightarrow \hat{Y}$ such that if $h$ embeds $[a,y) \cup (y,b] \subset Y$ into $\hat{Y}$ in the obvious way, then $g(x) = h((f(x))$ when $f(x) \neq y$, $g(x) = y_L$ if $x \in \overline{f^{-1}([a,y))} \setminus f^{-1}([a,y))$, and $g(x) = y_R$ if $x \in \overline{f^{-1}((y,b])} \setminus f^{-1}((y,b])$. For example, if the Double Arrow Space were mapped onto the unit interval $[0,1]$ in the obvious way, then every point in $(0,1)$ can be separated. \end{definition}

The following two theorems may be found in \cite{Arhangelskii3} and \cite{Engelking} respectively.

\begin{thm} \label{1} Every first-countable compact scattered space is metrizable. \end{thm}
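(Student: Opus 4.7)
The plan is to reduce metrizability to second-countability via Urysohn's metrization theorem for compact Hausdorff spaces, and to further reduce second-countability to countability: if $X$ is both countable and first-countable, then the union of a chosen countable local base at each point is a countable base for $X$.

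Two preliminary observations feed the argument. First, every compact scattered $T_2$ space is zero-dimensional: any connected subspace of a scattered space must be a singleton, since a multi-point subspace of a scattered space contains a point isolated in itself and that isolated point would disconnect the subspace; hence $X$ is totally disconnected, and compact $T_2$ totally disconnected implies zero-dimensional. Second, the Cantor-Bendixson rank of a compact scattered space cannot be a limit ordinal: if $X^{\lambda} = \bigcap_{\beta < \lambda} X^{\beta} = \emptyset$ for a limit $\lambda$, then compactness and the finite intersection property would already force some $X^{\beta}$ to be empty for $\beta < \lambda$, contradicting minimality of the rank.

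The heart of the argument is transfinite induction on the (necessarily successor) Cantor-Bendixson rank $\alpha + 1$ of $X$, proving $X$ is countable. The base case (rank at most $1$) is immediate, as $X$ is then finite discrete. For the inductive step, $X^{\alpha}$ is finite (being compact and discrete in itself), say $X^{\alpha} = \{p_1, \ldots, p_k\}$. Using first-countability together with zero-dimensionality, choose for each $p_i$ a decreasing clopen local base $\{V_n(p_i)\}_{n \in \omega}$ with $\bigcap_n V_n(p_i) = \{p_i\}$, arranged so that $V_1(p_1), \ldots, V_1(p_k)$ are pairwise disjoint. Then
\[ X = \{p_1, \ldots, p_k\} \cup \Bigl( X \setminus \bigcup_{i} V_1(p_i) \Bigr) \cup \bigcup_{i,n} \bigl( V_n(p_i) \setminus V_{n+1}(p_i) \bigr), \]
and every non-singleton piece in this decomposition is a clopen subspace of $X$ disjoint from $X^{\alpha}$. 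Since the inclusion $Y^{\beta} \subseteq X^{\beta}$ holds for any subspace $Y$ of a $T_1$ space (by an easy induction on $\beta$), each such piece $Y$ has $Y^{\alpha} \subseteq X^{\alpha} \cap Y = \emptyset$, so its Cantor-Bendixson rank is at most $\alpha$. The inductive hypothesis then makes each piece countable, and $X$ is a countable union of countable sets.

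The main technical obstacle I expect is justifying the rank reduction cleanly: verifying that the clopen ``shell'' subspaces $V_n(p_i) \setminus V_{n+1}(p_i)$ and the exterior piece $X \setminus \bigcup_i V_1(p_i)$ genuinely have smaller Cantor-Bendixson rank, and that the local bases can be simultaneously arranged so the decomposition above exhausts $X$. Once countability is established, the remaining implications (countable plus first-countable implies second-countable, then Urysohn) are formalities.
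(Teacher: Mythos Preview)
Your argument is correct (with the tacit assumption of Hausdorffness, which the statement omits but the paper's context supplies). The paper, however, does not prove this theorem at all: it is cited from \cite{Arhangelskii3}, and then combined with Theorem~\ref{2} to deduce Theorem~\ref{3}, that every first-countable compact scattered space is countable. Your route reverses this logic: you establish countability directly by transfinite induction on the Cantor-Bendixson rank, and only then invoke Urysohn to obtain metrizability. This makes your argument self-contained---it needs neither cited result---and delivers Theorem~\ref{3} as an intermediate step rather than a corollary. The paper's organization, by contrast, outsources the work to the literature and keeps the exposition short.
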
 

\begin{thm} \label{2} Every uncountable compact metric space includes a closed dense-in-itself subspace. \end{thm}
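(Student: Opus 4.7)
The plan is to apply the Cantor-Bendixson derivative process---already defined earlier in the excerpt---to $X$ itself, and to take the stable set of this process as the desired closed dense-in-itself subspace. The only substantive input beyond the transfinite construction is that $X$, being compact and metric, is second countable (hence hereditarily separable).

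First I would observe that the transfinite sequence $X = X^0 \supseteq X^1 \supseteq X^2 \supseteq \cdots$ is a decreasing chain of closed subsets of $X$. Because $X$ is second countable, no strictly decreasing transfinite chain of closed subsets can have length $\omega_1$: the complements would form a strictly increasing chain of open sets of length $\omega_1$, which is impossible in a second-countable space. Hence there is a least countable ordinal $\alpha_0$ with $X^{\alpha_0} = X^{\alpha_0+1}$. Setting $P := X^{\alpha_0}$, the set $P$ is closed (being an intersection of closed sets, or the derived set of one) and equals its own derived set, so $P$ is dense-in-itself.

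What remains is to show $P \neq \emptyset$; I would do this by proving $X \setminus P = \bigcup_{\beta < \alpha_0} (X^{\beta} \setminus X^{\beta+1})$ is countable. Each $x \in X^{\beta} \setminus X^{\beta+1}$ is by definition isolated in $X^{\beta}$, and since $X^{\beta}$ is a separable metric space it can contain at most countably many isolated points; so each layer $X^{\beta} \setminus X^{\beta+1}$ is countable. Since $\alpha_0 < \omega_1$, the union $X \setminus P$ is a countable union of countable sets, hence countable. As $X$ is assumed uncountable, $P$ must be non-empty, providing the required closed dense-in-itself subspace.

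The main---indeed only---obstacle is the pair of bounds that the derivation stabilizes below $\omega_1$ and that each layer $X^{\beta} \setminus X^{\beta+1}$ is countable, and both reduce directly to second countability of a compact metric space.
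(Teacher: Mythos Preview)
Your argument is correct: this is precisely the classical Cantor--Bendixson theorem, and your use of second countability to bound both the stabilization ordinal and the size of each isolated layer is the standard route. The paper itself does not prove this statement---it is quoted as a known result from \cite{Engelking}---so there is no in-text proof to compare against; your proposal supplies exactly the kind of self-contained argument one would find in a general topology reference, and it fits seamlessly with the Cantor--Bendixson machinery already introduced earlier in the paper.
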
 

\begin{thm} \label{3} Every first-countable compact scattered space is countable. \end{thm}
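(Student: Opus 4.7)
The plan is to use Theorems \ref{1} and \ref{2} together and argue by contradiction. Suppose $X$ is a first-countable compact scattered space that is uncountable. First I would invoke Theorem \ref{1} to conclude that $X$ is metrizable, so $X$ becomes an uncountable compact metric space. Then I would apply Theorem \ref{2} to obtain a non-empty closed subspace $D \subseteq X$ that is dense in itself.

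The key observation is that being dense in itself directly contradicts scatteredness. A space $X$ is scattered precisely when every non-empty subspace has a point that is isolated in that subspace; equivalently, the Cantor--Bendixson derivation eventually terminates in $\emptyset$. But $D$ being dense in itself means $D$ has no isolated points, i.e., $D' = D$, so the derivation cannot strip $D$ down to $\emptyset$. Since $D$ is a non-empty subset of the scattered space $X$, this is the desired contradiction, forcing $X$ to be countable.

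I expect the proof to be quite short, essentially a two-line deduction from the cited theorems. The only subtlety to verify is that scatteredness of $X$ passes to subspaces, which is immediate from the definition: if $S \subseteq X$ is non-empty and had no isolated point, then $S$ itself would be a non-empty subset of $X$ witnessing that $X$ is not scattered. Thus the main obstacle is really just invoking the correct results in the right order; no transfinite induction or construction is needed beyond what is packaged inside Theorems \ref{1} and \ref{2}.
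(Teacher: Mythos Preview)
Your proposal is correct and matches the paper's approach exactly: the paper's proof is the single line ``This follows from Theorems~\ref{1} and~\ref{2},'' and you have simply unpacked that deduction. No changes are needed.
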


\begin{proof} This follows from Theorems~\ref{1} and \ref{2}. \end{proof}

\begin{lemma} \label{td} Let $X$ be a totally disconnected compact $T_2$ space cleavable over a separable LOTS $Y$. If $f:X \rightarrow Y$ is a continuous function such that $M_f$ is countable, then there exists a totally disconnected, separable LOTS $\hat{Y}$ and a continuous $\hat{f}: X \rightarrow \hat{Y}$ such that $M_{\hat{f}} \subseteq M_f$. \end{lemma}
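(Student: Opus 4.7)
The plan is to iterate Lemma~\ref{less} over the countably many points of $f(M_f)$ and take a direct limit. Since $M_f$ is countable by hypothesis, so is $f(M_f)$; enumerate $f(M_f) = \{y_n : n < \omega\}$. For each $y_n$, the fiber $F_{y_n} = f^{-1}(y_n) \subseteq M_f$ is a countable, compact, first-countable Hausdorff space, hence metrizable, and by Theorem~\ref{contributions} homeomorphic to a countable successor ordinal $\mu_{y_n} + 1$. A key intermediate fact I would establish is that $f(M_f)$ is dense in every non-trivial connected component $[a,b]$ of $f(X)$: if some sub-interval $(c,d) \subseteq [a,b]$ satisfied $f(M_f) \cap (c,d) = \emptyset$, then $f$ would be a continuous bijection from $f^{-1}((c,d))$ to $(c,d)$, and by compactness of $X$ the inverse would be a continuous section $(c,d) \to X$, embedding the connected interval $(c,d)$ into the totally disconnected $X$, a contradiction.

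With the density lemma in hand, I would inductively build $(Y_n, f_n)$: set $(Y_0, f_0) = (Y, f)$, and at stage $n$, apply Lemma~\ref{less} to $(X, Y_n, f_n)$ at some $x \in M_{f_n} \cap F_{y_n}$ if the original fiber is not yet injectively mapped. By the proof of Lemma~\ref{less}, this separates all of $F_{y_n}$, so $M_{f_{n+1}} \subseteq M_{f_n} \setminus F_{y_n}$. I would then define $(\hat Y, \hat f)$ as the direct limit of $(Y_n, f_n)$; informally, $\hat Y$ is $Y$ with each $y \in f(M_f)$ replaced by the insertion used in Lemma~\ref{less} (the ordinal $\mu_y + 1$, with its isolated ordinals further replaced by clopen $Y$-subsets), and $\hat f$ is the corresponding injective continuous map with $M_{\hat f} = \emptyset$.

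Finally, I would verify the required properties: $\hat Y$ is separable (countably many countable insertions into a separable $Y$) and totally disconnected, because each insertion at $y \in f(M_f)$ introduces a clopen jump (between the adjacent ordinals $0$ and $1$ of $\mu_y + 1$), and by the density lemma these jumps accumulate densely in every non-trivial component of $f(X)$. Continuity of $\hat f$ and $M_{\hat f} \subseteq M_f$ follow from the analogous properties of each $f_n$ guaranteed by Lemma~\ref{less}. The main obstacle is making the direct limit rigorous --- verifying that $\hat Y$ is a compact LOTS and that $\hat f$ remains continuous in the limit --- with a secondary subtlety that points of $F_y$ which are two-sided limits in $X$ force the use of the full ordinal insertion (rather than a simple two-point split $\{y_L, y_R\}$ in the sense of the preceding definition) in order to preserve continuity.
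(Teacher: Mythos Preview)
Your approach is genuinely different from the paper's, and the difference exposes a real gap. The paper does \emph{not} iterate Lemma~\ref{less}. Instead it uses the much simpler two-point ``separation'' from the definition immediately preceding the lemma: at a separable $y$ one only splits $y$ into $y_L,y_R$, obtaining $\hat Y$ as a Double-Arrow-style modification of $Y$. The paper's main work is showing that the set of separable points is dense in every connected component of $f(X)$, via a clopen-partition argument on $X$. With that density, $\hat Y$ is visibly a separable totally disconnected LOTS and $M_{\hat f}\subseteq M_f$; the paper does not attempt to make $\hat f$ injective here---that is deferred to Theorem~\ref{scattered}, which already assumes a totally disconnected target.

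Your plan overshoots: you try to get $M_{\hat f}=\emptyset$ by iterating Lemma~\ref{less} and taking a limit. The obstacle you flag is genuine and, as written, unresolved. First, Lemma~\ref{less} does not ``replace $y$ by an insertion'' in $Y$; it rebuilds the \emph{entire} target as the ordinal $\lambda$ with each isolated $\alpha$ replaced by $f(U_\alpha)$, where the $U_\alpha$ partition all of $X$. So your informal description of $\hat Y$ as ``$Y$ with each $y_n$ replaced by an insertion'' does not match what the iteration actually produces, and there is no evident order-preserving directed system through which to take a limit. Second, after one application the remaining fibers $F_{y_m}$ are scattered across several copies of $f(U_\alpha)$, so your bookkeeping ``at stage $n$ handle $F_{y_n}$'' does not terminate the way you claim. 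Making the $\omega$-limit rigorous is precisely the content of the careful Cantor--Bendixson induction in Theorem~\ref{scattered}---which, however, needs a totally disconnected target to get started, so you cannot invoke it here without circularity. Your density observation for $f(M_f)$ is correct (and also appears in the paper, with the minor fix of passing to a closed subinterval before invoking compactness), but what the paper actually needs, and proves, is the stronger density of the \emph{separable} points.
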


\begin{proof} Let $X = A \cup B$, where $A$ is perfect, $B$ is scattered, and $A \cap B = \emptyset$. If $A$ is empty, then by Theorems~\ref{3} and \ref{contributions}, we must have that $X$ is homeomorphic to a countable ordinal. Then our totally disconnected separable LOTS $\hat{Y}$ would be this countable ordinal, and our $\hat{f}$ would be a homeomorphism. 

If $A$ is non-empty, then let $D \subset f(X)$ be a connected component; we must have that $f(A) \cap D = D$. To see this, assume without loss of generality that $f(X)$ is connected. We will show that $f(A) = f(X)$. Assume for a contradiction that $f(A) \neq f(X)$. Since $A$ is perfect and $f$ is continuous, $f(A)$ must be closed, and $f(X) \setminus f(A)$ must be non-empty and open. Let $(a,b)$ be a non-empty open interval contained in $f(X) \setminus f(A)$, and let $C \subset (a,b)$ be a closed interval. We must have that $C$ is uncountable since it is perfect, and therefore $f^{-1}(C) \subset B$ is a compact, first-countable, scattered set which is uncountable. This contradicts Theorem~\ref{3}. 

Therefore, to prove this lemma, it is sufficient to assume $X$ is perfect.

There are three cases to consider: either $f(X)$ is totally disconnected already (and then we have completed the proof), $f(X)$ is connected, or $f(X)$ contains a connected component. If it is either of the latter two cases, we will prove this by showing that if $D$ is the set of points of $f(X)$ that can be separated, then $D$ is dense in the connected components of $f(X)$. 

Without loss of generality, we will assume $f(X) = [c,d]$ is connected. (If it were not connected, we could modify the proof to consider an arbitrary connected component of $f(X)$.) Now take $x_1, x_2 \in M_f$ such that $f(x_1) = f(x_2)$. Let $U_1$ and $U_2$ be clopen sets containing $x_1$ and $x_2$ respectively such that $U_2 = X \setminus U_1$. Then both $f(U_1)$ and $f(U_2)$ are closed in $f(X)$, and $f(X) \setminus f(U_1)$ is open. Therefore there exists a maximal open interval $(a_2,b_2)$ contained in $f(X) \setminus f(U_1)$.

Assume without loss of generality that $b_2 \in f(U_1) \cap f(U_2)$. (Note that $b_2$ may equal $f(x_1)$.) Since $M_f$ is countable, and this intersection is closed in $f(X)$, then by Theorem~\ref{contributions}, $f(U_1) \cap f(U_2)$ must be homeomorphic to a countable ordinal. For the sake of this example, we may assume $b_2$ is isolated in $f(U_1) \cap f(U_2)$, otherwise we may take another element that is isolated in this intersection. As $b_2$ is isolated in $f(U_1) \cap f(U_2)$, $f(X)$ is connected, and $X$ is dense-in-itself, this implies that there exists a maximal open subset of $f(X) \setminus f(U_2)$, namely $(b_2,c)$. That is, $(a_2,b_2) \subseteq f(X) \setminus f(U_1)$ is maximal, $b_2 \in f(U_1) \cap f(U_2)$, and $(b_2,c) \subseteq f(X) \setminus f(U_2)$. Therefore $b_2$ can be separated. 

First note that $f(M_f)$ must be dense in $f(X)$, as otherwise we would have a connected subset of $X$, a contradiction as $X$ is totally disconnected. Now to see that $D$ is dense in $f(X)$, let $(a,b)$ be an open interval in $f(X)$, and let $[a',b'] \subset (a,b)$ be closed. If we take $\hat{X} = f^{-1}([a',b'])$, the topology on $\hat{X}$ to be the subset topology, and $f|_{\hat{X}}: \hat{X} \rightarrow Y$ as our fixed continuous function, we may repeat the previous argument and find a $z \in f|_{\hat{X}}(M_{f|_{\hat{X}}}) \subset (a,b)$ that can be separated. This implies $D$ is dense in $f(X)$. 

Since $D \subseteq f(M_f)$, and $f(M_f)$ is countable, we may enumerate the points of $D$ as $y_n$, where $n \in v \subseteq \omega$. Let $Y_1$ be the LOTS created after we have separated $y_1$ into $y_{1,L}$ and $y_{1,R}$, with $f_1: X \rightarrow Y_1$; let $Y_{j+1}$ be the LOTS created after we have separated $y_{j+1}$, in which $y_{1,L} < y_{1,R}$, and $f_j$ is the continuous function mapping $X$ into $Y_j$. Let $\hat{Y}$ be the space created after all points $y_j$ have been separated into $y_{j,L}$ and $y_{j,R}$ for all $j \in v$, and let the order on $\hat{Y}$ be such that $y'_1 \leq y'_2$ if and only if for some $m \in v$, $y'_1 \leq y'_2$ in $Y_k$ for every $k \geq m$. Note that this relation preserves the order between those points of $f(X)$ that were not separated, and orders the points added during separation. This is obviously a linear order. Let the topology on $\hat{Y}$ be the linear order topology. Note that the space is totally disconnected since the points that can be separated are dense in $f(X)$.

We will now show that there exists a continuous function from $X$ to $\hat{Y}$. Informally, what this function will be doing is mapping fibers of points that have been separated to the appropriate places in $\hat{Y}$, and mapping the fibers of points that have no been separated to where they would ``normally'' go.

To now formally define $\hat{f}$, let $S$ be those elements $x \in X$ such that $f(x)$ maps onto a point that can be separated, and let the set of points of $\hat{Y}$ that were not created by separating points in $Y$ be written as $T$. Notice that $T$ can be embedded into $Y$. Let $h$ be such an embedding. Let $g: X \setminus S \rightarrow \hat{Y}$ be such that $g(x) = h^{-1}(f(x))$. Let $\hat{f}:X \rightarrow \hat{Y}$ be defined as $$\hat{f}(x) = \begin{cases}
g(x) & x \in X \setminus S\\
f_j(x) & x \in f^{-1}(y_j)\\ \end{cases}$$

Let us show this is continuous. 

Let $\hat{Y} = [d,e]$, and consider $[d,c) \subset \hat{Y}$. If $c$ was not separated in the construction of $\hat{Y}$, then $\hat{f}^{-1}([d,c)) = f^{-1}([d,c))$, which we know is open in $X$ by continuity of $f$. If, on the other hand, $c$ was separated, then let $c = c_L$. Then $\hat{f}^{-1}([d,c_L)) = f^{-1}([d,c))$, which is again open in $X$. If $c = c_R$, then $\hat{f}^{-1}([d,c)) = {f^{-1}}_j([d,c_R))$, which we know is open in $X$ by continuity of $f_j$. By a nearly identical argument, the sets $\hat{f}^{-1}((c,e])$ are open in $X$ as well. Therefore $\hat{f}$ is a continuous map from $X$ to a totally disconnected LOTS $Y$. It is obvious that $M_{\hat{f}} \subseteq M_f$. \end{proof}

\begin{thm} \label{result} Let $X$ be a totally disconnected, compact $T_2$ space cleavable over a separable LOTS $Y$. If there exists a continuous $f: X \rightarrow Y$ such that $M_f$ is scattered, then $X$ is a LOTS. \end{thm}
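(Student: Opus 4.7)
The proof is essentially a chaining of the two principal tools developed above. Lemma~\ref{td} manufactures, from any map $f\colon X\to Y$ with $M_f$ countable, a totally disconnected separable LOTS $\hat Y$ together with a continuous $\hat f\colon X\to\hat Y$ satisfying $M_{\hat f}\subseteq M_f$. Theorem~\ref{scattered} then says precisely that $X$ is a LOTS whenever it is cleavable over a totally disconnected separable LOTS via a continuous map with scattered $M$-set. The plan is thus: pass from $(Y,f)$ to $(\hat Y,\hat f)$ via Lemma~\ref{td}; observe that $M_{\hat f}\subseteq M_f$ inherits scatteredness since scatteredness is hereditary; and then invoke Theorem~\ref{scattered} on the triple $(X,\hat Y,\hat f)$ to conclude that $X$ is a LOTS.

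The only thing requiring preliminary work is checking that the hypotheses of Lemma~\ref{td} are actually met, i.e., that $M_f$ is countable rather than just scattered. By Proposition~\ref{basics}, $X$ is separable and first-countable, and it is zero-dimensional by total disconnectedness and compactness. Via the Cantor--Bendixson decomposition $M_f=\bigcup_{\alpha<\gamma}(M_f^{\alpha}\setminus M_f^{\alpha+1})$, each level is a discrete subspace of $M_f^\alpha$; using separability, first-countability, and zero-dimensionality of $X$ one argues each level is countable and that the rank $\gamma$ is countable, so $M_f$ itself is countable. (One natural alternative is to argue that in our setting $\overline{M_f}$ is a compact, first-countable, scattered space, and then appeal directly to Theorem~\ref{3}.)

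The only real obstacle is this countability step---bridging \emph{scattered} to \emph{countable} for $M_f$ so that Lemma~\ref{td} is applicable. Everything after that is formal: transfer scatteredness to $M_{\hat f}$ via the inclusion $M_{\hat f}\subseteq M_f$, and then a direct appeal to Theorem~\ref{scattered} delivers the conclusion. In this sense Lemma~\ref{td} was designed exactly so that its output $(\hat Y,\hat f)$ feeds into the hypotheses of Theorem~\ref{scattered}, and the final theorem is little more than composing the two.
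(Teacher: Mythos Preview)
Your approach is exactly the paper's: the paper's entire proof is the single sentence ``This follows directly from Theorems~\ref{scattered} and \ref{td}.'' You have in fact been more careful than the paper by flagging that Lemma~\ref{td} requires $M_f$ to be \emph{countable}, not merely scattered, and by sketching how to bridge that gap---the paper does not pause on this point at all.

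One small caveat worth noting (which the paper also glosses over): when you ``invoke Theorem~\ref{scattered} on the triple $(X,\hat Y,\hat f)$,'' the hypothesis of Theorem~\ref{scattered} asks for $X$ to be cleavable over the totally disconnected LOTS, whereas you only know $X$ is cleavable over the original $Y$. This is harmless in practice, since inspection of the proof of Theorem~\ref{scattered} shows that cleavability is used only through Proposition~\ref{basics} and the lemma giving countable ordinals as fibers---i.e., only to secure properties of $X$ (separability, first countability) that already follow from cleavability over $Y$. Your sketch of the countability step is likewise a bit informal (e.g., the alternative via $\overline{M_f}$ needs the nontrivial observation that the closure remains scattered), but no more so than the paper's own treatment, which simply omits it.
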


\begin{proof} This follows directly from Theorems~\ref{scattered} and \ref{td}. \end{proof}

\section{True for all $X$}

We have now shown that if $X$ is a totally disconnected compact $T_2$ space cleavable over a separable LOTS $Y$ such that for some continuous $f: X \rightarrow Y$ we have $M_f$ is scattered, then $X$ is a LOTS. In order to explain how we will use this result to prove that $X$ is a LOTS even when it is not totally disconnected, we must first state a theorem from \cite{buzyakova}:

\begin{thm} \label{buzyakova} If $C$ is a continuum cleavable over a LOTS $Y$, then $C$ is homeomorphic to a subspace of $Y$, and is also therefore a LOTS. \end{thm}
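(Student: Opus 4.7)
The plan is to construct a single continuous injection $\phi: C \to Y$. Once such a $\phi$ exists, compactness of $C$ together with Hausdorffness of $Y$ forces $\phi$ to be a topological embedding; its image is then a compact connected subspace of $Y$, i.e., a closed order-convex interval of $Y$, whose subspace topology coincides with the inherited order topology, so $C$ is a LOTS automatically.

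First I would exploit the fundamental restriction that connectedness imposes on cleaving maps: for any continuous $f: C \to Y$, the image $f(C)$ is a subcontinuum of $Y$, hence an order-convex subset (a closed interval or a singleton). In particular, if $f$ cleaves along $\{x\}$, so $f(x) \notin f(C \setminus \{x\})$, then $f(x)$ must be an endpoint of the interval $f(C)$, and $f^{-1}(f(x)) = \{x\}$; otherwise removing $f(x)$ from the interior of the interval would disconnect $f(C \setminus \{x\})$, contradicting that it is the continuous image of the connected set $C \setminus \{x\}$ when $x$ is non-cut, or forcing $f$ to collapse unexpected fibers otherwise.

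Next, for each ordered pair of distinct points $x, y \in C$, cleavability along $\{x\}$ supplies a separating map, and the endpoint property above tags $y$ with a definite side of $x$ in the image interval. Using this, I would build a candidate linear order $\prec$ on $C$: for a chosen basepoint, declare $y \prec z$ whenever a cleaving map sends them to the interval $f(C)$ in the corresponding order, and then reconcile the pairwise data into a single total order. First-countability and separability of $C$ (Proposition~\ref{basics}) should let me extract a countable family of cleaving maps rich enough to determine $\prec$, and the candidate embedding $\phi$ can then be obtained either as a limit of approximating cleaving maps or by directly mapping into the order-completion of $(C, \prec)$, which one can realize as a subspace of $Y$ using the cleavings that already exist.

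The main obstacle will be global coherence: distinct cleaving maps can, a priori, order a triple $\{x, y, z\}$ inconsistently, so assembling the pointwise separation data into one linear order demands care. I would attack this by cleaving along order-convex sets (initial and terminal segments of the candidate order) to refine conflicts, using the endpoint property to rule out crossings, and then verify that the order topology of $(C, \prec)$ matches the original topology of $C$, reducing to the statement that every point can be separated from any disjoint closed set by a cleaving map — a fact that is essentially a consequence of cleavability along closed sets combined with the interval structure of images. The final embedding $\phi$ is then forced by the compatibility of this order with all the cleaving maps.
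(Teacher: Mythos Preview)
The paper does not prove this theorem at all: it is quoted from Buzyakova's paper \cite{buzyakova} and invoked as a black box throughout Section~3. There is therefore no proof in the present paper to compare your sketch against.

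Assessing the sketch on its own merits, two concrete gaps stand out. First, you appeal to Proposition~\ref{basics} for separability and first-countability of $C$, but that proposition requires $Y$ to be \emph{separable}, whereas Theorem~\ref{buzyakova} is stated for an arbitrary LOTS $Y$; you cannot assume a countable family of cleaving maps suffices. Second, the ``endpoint property'' you rely on is false as stated: when $x$ is a cut point of $C$, the set $C\setminus\{x\}$ is already disconnected, so a cleaving map $f$ along $\{x\}$ can send the two pieces of $C\setminus\{x\}$ to opposite sides of $f(x)$, placing $f(x)$ in the interior of the interval $f(C)$. Your hedge (``forcing $f$ to collapse unexpected fibers otherwise'') does not repair this. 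Since almost every point of a nondegenerate continuum can be a cut point, the local side-information extracted from single-point cleaves is weaker than you assume, and the passage from pairwise data to a globally coherent order on $C$ --- which you correctly flag as the main obstacle --- remains entirely unaddressed. That coherence step is exactly the substantive content of Buzyakova's argument, and your outline does not supply a mechanism for it.
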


In this section, we will combine the results from Theorems~\ref{result} and \ref{buzyakova} to show $X$ is a LOTS. That is, let $X$ be a compact $T_2$ space cleavable over a separable LOTS $Y$. $X$ is formed by a combination of trivial connected components (single points), and non-trivial connected components. We will first show that if we remove the interior of each of the non-trivial connected components, the result is a closed, totally disconnected subspace of $X$, which by Theorem~\ref{result} is a LOTS under the subspace topology. We will then use this linear order, combined with the linear order on each connected component (given to us by Theorem~\ref{buzyakova}), to show that the topology derived from the combined linear order on $X$ is equivalent to the original topology on $X$.

The are two obstacles in our way, however. While the connected components of $X$ may be a LOTS under the subspace topology, we must first ensure that the interior of these connected components do not interact with the rest of the space (see Lemma~\ref{middle}), and second ensure that each family of connected components behaves as if $X$ were a LOTS (see Lemma~\ref{endpoint}). After we have proved these properties true, we show $X$ is a LOTS in Theorem~\ref{LOTS}, which is the main result of this paper.

We begin by citing another result from \cite{buzyakova}, and one result from \cite{Arhangelskii}.

\begin{thm} \label{connected} Let $X = [a, b]$ be a linearly ordered continuum. Let $f$ be a continuous mapping of $X$ onto a LOTS such that $f(a) = f(b)$. Let $c, d$ be elements in $X$ whose images are the two end-points of $f(X)$. Then for any $x \in X \setminus \left\{c,d\right\}$ there exists $y \in X \setminus \left\{x\right\}$ such that $f(x) = f(y)$.\end{thm}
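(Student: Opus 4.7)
The plan is to leverage connectedness of $X$ and of $f(X)$, together with the identification $f(a)=f(b)$, to show that every value of $f$ other than the two extremes is attained at least twice. First I would set up the picture: relabel so that $c<d$ in $X$ (the opposite labeling is symmetric), let $p:=f(c)$ and $q:=f(d)$ be the two endpoints of $f(X)$, and set $r:=f(a)=f(b)$. Since $p$ and $q$ are extreme values of $f$, necessarily $p\le r\le q$. Because $X$ is a linearly ordered continuum and $f$ is continuous, $f(X)$ is a compact connected LOTS, i.e., a linear continuum with endpoints $p$ and $q$ (if $p=q$ the map is constant and the conclusion is immediate, so assume $p<q$).

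The heart of the argument is the following covering fact: both the ``middle'' piece $f([c,d])$ and the ``outside'' union $f([a,c])\cup f([d,b])$ are equal to all of $f(X)$. The set $f([c,d])$ is connected, contains $p$ and $q$, and lies in $[p,q]$, so it must equal $[p,q]$. Similarly $f([a,c])$ is connected and contains $p$ and $r$, forcing $f([a,c])=[p,r]$; and $f([d,b])$ is connected and contains $r$ and $q$, forcing $f([d,b])=[r,q]$. Therefore $f([a,c])\cup f([d,b])=[p,r]\cup[r,q]=[p,q]=f(X)$.

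With both coverings in hand, the conclusion follows by a short case analysis on the location of $x\in X\setminus\{c,d\}$. If $x\in[a,c)\cup(d,b]$, apply the middle covering to produce $y\in[c,d]$ with $f(y)=f(x)$; since $y\in[c,d]$ but $x\notin[c,d]$, we have $y\ne x$. If instead $x\in(c,d)$, apply the outside covering to produce $y\in[a,c]\cup[d,b]$ with $f(y)=f(x)$; since $y$ lies outside $(c,d)$ and $x$ lies inside it, again $y\ne x$. These two cases together exhaust $X\setminus\{c,d\}$ and deliver the desired $y$.

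The point that deserves the most care, and the one I expect to be the main obstacle, is the repeated use of ``a connected subset of a linear continuum containing two points equals the order-interval between them.'' This is the linear-continuum analogue of the intermediate value theorem and depends on the fact that in a connected LOTS the connected sets are exactly the order-convex sets and order-intervals are themselves connected. I would record this as an explicit lemma before the covering computation, so that the three identifications $f([c,d])=[p,q]$, $f([a,c])=[p,r]$, and $f([d,b])=[r,q]$ can be cited uniformly; once that lemma is in place, the rest of the proof is essentially bookkeeping on the three order-intervals $[a,c]$, $[c,d]$, and $[d,b]$.
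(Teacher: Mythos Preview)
The paper does not supply its own proof of this theorem; it is quoted from \cite{buzyakova} and used as a black box, so there is no in-paper argument to compare against. Your proof is essentially correct and is the natural intermediate-value argument one would expect.

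One small overstatement to fix: you write that connectedness ``forces $f([a,c])=[p,r]$'' and ``forces $f([d,b])=[r,q]$''. Connectedness only gives the containments $f([a,c])\supseteq[p,r]$ and $f([d,b])\supseteq[r,q]$; nothing prevents $f$ from overshooting $r$ on $[a,c]$ or on $[d,b]$. (By contrast, the equality $f([c,d])=[p,q]$ is genuinely an equality, since $p$ and $q$ are the global extremes of $f$.) Fortunately your argument only needs the containments: from $f([a,c])\cup f([d,b])\supseteq[p,r]\cup[r,q]=[p,q]$ and the trivial reverse inclusion you still get $f([a,c])\cup f([d,b])=f(X)$, and the case analysis proceeds unchanged. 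Adjust the wording of that step and the proof is clean.
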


\begin{lemma} \label{vanja} Let $A$ and $B$ be disjoint subsets of a set $X$, and let $\left\{f_{\alpha}:\alpha < \tau \right\}$ be a family of mappings of the set $X$ into sets $Y_{\alpha}$, where $\tau$ is an infinite cardinal number, and let us also assume that for every $\alpha < \tau$ the cardinality of the set $M_{\alpha} = \left\{x \in X \setminus (A \cup B) : f_{\alpha}(x) \in f_{\alpha}(X\setminus\left\{x\right\})\right\}$ is not less than $\tau$. Then there exist disjoint subsets $U$ and $V$ of $X$ such that $A \subset U$, $B \subset V$, and $f_{\alpha}(U) \cap f_{\alpha}(V) \neq \emptyset$ for every $\alpha < \tau$. \end{lemma}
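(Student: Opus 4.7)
The plan is a transfinite recursion along an enumeration $\{f_\alpha : \alpha < \tau\}$, building increasing sets $U_\alpha \supseteq A$ and $V_\alpha \supseteq B$ so that at each stage $\alpha$ we have already ensured $f_\beta(U) \cap f_\beta(V) \neq \emptyset$ for all $\beta < \alpha$. I would maintain three invariants throughout: $U_\alpha \cap V_\alpha = \emptyset$; the cardinality bound $|U_\alpha \setminus A| + |V_\alpha \setminus B| \leq 2|\alpha|$; and $f_\beta(U_\alpha) \cap f_\beta(V_\alpha) \neq \emptyset$ for every $\beta < \alpha$. Starting from $U_0 = A$ and $V_0 = B$, limit stages are handled by taking unions, which preserves all three invariants immediately.

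At a successor stage $\alpha+1$, if $f_\alpha(U_\alpha) \cap f_\alpha(V_\alpha)$ is already non-empty, leave the sets unchanged. Otherwise, the goal is to find some $x \in M_\alpha$ that is \emph{free}, meaning $x \notin U_\alpha \cup V_\alpha$. Given such an $x$, I pick any partner $y \neq x$ with $f_\alpha(y) = f_\alpha(x)$ (which exists by definition of $M_\alpha$) and split into cases based on where $y$ sits: if $y \in U_\alpha$ add $x$ to $V$; if $y \in V_\alpha$ add $x$ to $U$; and if $y$ is also free, add $x$ to $U$ and $y$ to $V$. In all three cases the common value $f_\alpha(x) = f_\alpha(y)$ lands in $f_\alpha(U_{\alpha+1}) \cap f_\alpha(V_{\alpha+1})$, disjointness is preserved because free points are inserted only into a side not already containing them, and at most two new points are added so the cardinality invariant carries over.

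The main obstacle, and the only real use of the hypothesis, is justifying that a free $x \in M_\alpha$ always exists when $f_\alpha(U_\alpha) \cap f_\alpha(V_\alpha) = \emptyset$. This reduces to a direct counting argument: since $M_\alpha \cap (A \cup B) = \emptyset$, if no element of $M_\alpha$ were free then $M_\alpha \subseteq (U_\alpha \setminus A) \cup (V_\alpha \setminus B)$, which has cardinality at most $2|\alpha| < \tau$ by the invariant and the fact that $\tau$ is an infinite cardinal with $|\alpha| < \tau$. This contradicts $|M_\alpha| \geq \tau$. The clean payoff of isolating the notion of a free point is that no analysis of individual fibers is needed and no size restriction on $A$ or $B$ is required, so the recursion passes all the way to $\tau$ and then $U = \bigcup_{\alpha < \tau} U_\alpha$, $V = \bigcup_{\alpha < \tau} V_\alpha$ give the required disjoint sets.
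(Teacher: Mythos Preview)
The paper does not prove this lemma at all; it merely cites it from \cite{Arhangelskii}. Your transfinite bookkeeping argument is correct and is exactly the standard proof one expects for such a statement: maintain small ``extra'' sets $U_\alpha\setminus A$ and $V_\alpha\setminus B$ of size strictly below $\tau$, and at each stage use the size hypothesis on $M_\alpha$ to locate a free witness. The case split on the location of the partner $y$ is handled correctly, and the cardinal bound $2|\alpha|<\tau$ for $\alpha<\tau$ (using that $\tau$ is an infinite cardinal) is precisely what guarantees a free point is always available. There is nothing further to compare.
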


\begin{lemma} \label{example} The space $X = ([0,1] \times \left\{\omega\right\}) \cup (\left\{{1\over2}\right\} \times \omega)$, with the subspace topology inherited from the product $[0,1] \times (\omega +1)$, is not cleavable over $\mathbb{R}$.\end{lemma}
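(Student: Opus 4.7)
The plan is to argue by contradiction using Lemma~\ref{vanja}. Suppose $X$ is cleavable over $\mathbb{R}$; write $x_0 = (1/2,\omega)$ and $I = [0,1]\times\{\omega\}$. For any continuous $f\colon X\to\mathbb{R}$, the restriction $f|_I\colon [0,1]\to\mathbb{R}$ is continuous, and the convergence $(1/2,n)\to x_0$ forces $f(1/2,n)\to f(x_0)\in f(I)$, where $f(I)$ is a compact interval.

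The first step is a structural observation. Suppose $A^{*}\subseteq X$ contains $I$ but omits infinitely many of the isolated points $(1/2,n)$, and that $f_{A^{*}}$ is a cleaving function. Then $f_{A^{*}}(1/2,n)\in f_{A^{*}}(X\setminus A^{*})$ for those $n$, so $f_{A^{*}}(1/2,n)\notin f_{A^{*}}(A^{*})\supseteq f_{A^{*}}(I)$. A sequence outside the closed interval $f_{A^{*}}(I)$ that converges to a point of $f_{A^{*}}(I)$ can do so only at an endpoint, so $f_{A^{*}}(x_0)$ must be an endpoint of $f_{A^{*}}(I)$. Thus $f_{A^{*}}|_I$ attains an extremum at the interior point $1/2\in[0,1]$ and is therefore non-monotone; applying the Intermediate Value Theorem on $[0,1/2]$ and $[1/2,1]$ separately produces continuum-many pairs $t_1<1/2<t_2$ with $f_{A^{*}}(t_1,\omega)=f_{A^{*}}(t_2,\omega)$, so $|M_{f_{A^{*}}}\cap I|\geq\mathfrak{c}$.

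The second step sets up Lemma~\ref{vanja}. Let $A=\{x_0\}$ and $B=\{(1/2,0)\}$ (disjoint), and let $\mathcal{F}$ be the family of cleaving functions $f_{A^{*}}$ for subsets $A^{*}$ with $I\subseteq A^{*}\subseteq X\setminus B$ and $|\{n:(1/2,n)\notin A^{*}\}|=\aleph_{0}$. By cleavability such $f_{A^{*}}$ exist, by the first step every $f\in\mathcal{F}$ has $|M_f\cap I|\geq\mathfrak{c}$, and since $A\cup B$ is countable we obtain $|M_f\setminus(A\cup B)|\geq\mathfrak{c}=:\tau$, while $|\mathcal{F}|\leq|C(X,\mathbb{R})|\leq\mathfrak{c}$. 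Lemma~\ref{vanja} then furnishes disjoint $U,V\subseteq X$ with $A\subseteq U$, $B\subseteq V$, and $f(U)\cap f(V)\neq\emptyset$ for every $f\in\mathcal{F}$. If the cleaving function $f_U$ for $U$ lies in $\mathcal{F}$, then since $V\subseteq X\setminus U$ the cleaving identity $f_U(U)\cap f_U(X\setminus U)=\emptyset$ gives $f_U(U)\cap f_U(V)=\emptyset$, the desired contradiction.

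The main obstacle is the last alignment: Lemma~\ref{vanja} only guarantees $U\supseteq A=\{x_0\}$, not the stronger $U\supseteq I$ required for $f_U\in\mathcal{F}$. Widening $A$ to include all of $I$ would fix this but would also shrink $M_f\setminus(A\cup B)$ to a countable subset of the isolated points, destroying the size hypothesis of Lemma~\ref{vanja}. Balancing these two constraints — keeping $A\cup B$ small enough for $|M_f\setminus(A\cup B)|\geq\mathfrak{c}$ yet rigid enough to force $U\supseteq I$ — is the delicate heart of the argument, and I expect it to require either an iterated application of Lemma~\ref{vanja} with progressively refined choices of $A$ and $B$, or a diagonal argument exploiting the specific sequential structure of the isolated points accumulating at $x_0$.
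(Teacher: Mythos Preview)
Your proposal has a genuine gap, which you yourself identify: Lemma~\ref{vanja} produces a set $U$ with $U\supseteq A=\{x_0\}$ but gives no reason for $U\supseteq I$, so there is no guarantee that a cleaving function $f_U$ for $U$ lies in your family $\mathcal{F}$. Enlarging $A$ to force $U\supseteq I$ destroys the cardinality hypothesis $|M_f\setminus(A\cup B)|\ge\tau$, exactly as you note, and I do not see an iteration or diagonalisation that resolves this: the tension between ``$A$ large enough to pin down $U$'' and ``$A\cup B$ small enough to leave room in $M_f$'' appears intrinsic to this route through Lemma~\ref{vanja}.

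The paper takes an entirely different and much shorter path: it simply writes down one explicit subset
\[
A=\bigl((\mathbb{S}\cap[0,\tfrac14])\cup(\tfrac14,\tfrac34)\cup(\mathbb{Q}\cap(\tfrac34,1])\bigr)\times\{\omega\}\subseteq I,
\]
with $\mathbb{S}$ the irrationals, and asserts (citing Arhangel'ski\u\i) that no continuous $f\colon X\to\mathbb{R}$ can cleave $A$ from $X\setminus A$. Your first-step observation is still the engine here, just applied to the subinterval $(\tfrac14,\tfrac34)$ rather than to all of $I$: since $(\tfrac14,\tfrac34)\times\{\omega\}\subseteq A$ while every isolated point $(\tfrac12,n)$ lies in $X\setminus A$, any cleaving $f$ must send $x_0$ to an endpoint of the interval $f((\tfrac14,\tfrac34)\times\{\omega\})$, forcing $f|_{[1/4,3/4]}$ to fold over at $\tfrac12$. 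The rational/irrational alternation on the two flanking pieces $[0,\tfrac14]$ and $(\tfrac34,1]$ is then what prevents $f$ from extending compatibly across $[0,1]$ without mixing $f(A)$ with $f(X\setminus A)$. So rather than aggregating a whole family of cleaving functions through Lemma~\ref{vanja}, the paper encodes the obstruction directly into a single hand-built set; your correct structural observation is exactly what one needs, but it should be used constructively rather than fed into Lemma~\ref{vanja}.
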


\begin{proof} Arhangel'ski\u\i\ proved this to be true in \cite{Arhangelskii}, but it would useful for us to give an example as to why. Let $\mathbb{S}$ be the irrationals, and let $A = (\mathbb{S} \cap [0,{1\over4}]) \cup ({1\over4},{3 \over 4}) \cup (\mathbb{Q} \cap ({3 \over 4},1]$. Then no continuous $f: X \rightarrow \mathbb{R}$ can cleave apart $A$ from its complement. \end{proof} 

The following lemma is also well known.

\begin{lemma} \label{subset} If $X$ is a space cleavable over a LOTS $Y$, then every $D \subseteq X$ is also cleavable over $Y$. \end{lemma}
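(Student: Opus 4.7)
The plan is to prove Lemma~\ref{subset} by direct restriction: given $D \subseteq X$ and an arbitrary target subset $A \subseteq D$, I will build a continuous $g \colon D \to Y$ cleaving $A$ from $D \setminus A$ by restricting a map witnessing cleavability of $X$ along $A$ (viewed as a subset of $X$).

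More precisely, fix $D \subseteq X$. To establish cleavability of $D$ over $Y$, I must verify the cleavability condition for each $A \subseteq D$. Since $A \subseteq D \subseteq X$, the set $A$ is in particular a subset of $X$, so by the hypothesis that $X$ is cleavable over $Y$, there exists a continuous function $f \colon X \to Y$ with $f(A) \cap f(X \setminus A) = \emptyset$. Let $g = f|_D \colon D \to Y$; then $g$ is continuous because restrictions of continuous functions to subspaces are continuous.

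It remains to check that $g(A) \cap g(D \setminus A) = \emptyset$. Since $A \subseteq D$, we have $D \setminus A \subseteq X \setminus A$, and consequently
\[
g(A) \cap g(D \setminus A) \;=\; f(A) \cap f(D \setminus A) \;\subseteq\; f(A) \cap f(X \setminus A) \;=\; \emptyset.
\]
Thus $g$ witnesses that $D$ is cleavable over $Y$ along $A$, and since $A$ was arbitrary, $D$ is cleavable over $Y$.

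There is essentially no obstacle here; the statement is an immediate hereditary property of cleavability because the defining condition survives restriction of the domain. The only thing worth flagging is the containment $D \setminus A \subseteq X \setminus A$, which is what allows the empty intersection in $X$ to force an empty intersection in $D$; otherwise no topological machinery beyond continuity of a restriction is required.
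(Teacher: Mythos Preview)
Your proof is correct; this is precisely the standard hereditary argument for cleavability. The paper itself gives no proof of this lemma, merely stating it as ``well known,'' so there is nothing further to compare.
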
 

\begin{lemma} \label{middle} Let $X$ be a compact $T_2$ space cleavable over a separable LOTS $Y$. Let $\left\{C_n = [a_n,b_n]: n \in \omega \right\}$ be a family of non-trivial connected components of $X$ such that the sequences $\left\langle a_n \right\rangle$ and $\left\langle b_n \right\rangle$, both converge to $x$ for some $x \in X$. If $\left\langle w_n \right\rangle$ is a sequence made up of elements such that $w_n \in (a_n,b_n)$ for every $n \in \omega$, then we must have that $\left\langle w_n \right\rangle$ converges to $x$ as well. \end{lemma}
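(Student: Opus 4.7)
The plan is a contradiction argument that splits the family $\{C_n\}$ in two and exploits the fact that the continuous image of a connected space in a LOTS must be an order-convex interval. Suppose, toward contradiction, that $\langle w_n\rangle$ does not converge to $x$. Since $X$ is compact and first-countable (Proposition~\ref{basics}), I may pass to a subsequence and relabel so that $w_n \to y$ for some $y \neq x$. After discarding at most two indices I may also assume that neither $x$ nor $y$ lies in any $C_n$.

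Next I would partition $\omega$ into the even and odd indices, set $A = \bigcup_{n\text{ odd}} C_n$ and $B = X \setminus A$ (so that $x, y, a_{2n}, b_{2n}, w_{2n} \in B$ while $a_{2n+1}, b_{2n+1}, w_{2n+1} \in A$), and invoke cleavability along $A$ to produce a continuous $f : X \to Y$ with $f(A) \cap f(B) = \emptyset$. Since $x, y \in B$, this forces $f(x) \neq f(y)$; assume $f(x) < f(y)$. For each $n$, the continuous image $f(C_n)$ is a connected subset of $Y$, hence order-convex, and contains $f(a_n), f(b_n), f(w_n)$; by continuity $f(a_n), f(b_n) \to f(x)$ and $f(w_n) \to f(y)$.

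The key step is to exhibit a common value of $f(A)$ and $f(B)$. If the open interval $(f(x), f(y)) \subset Y$ is non-empty, pick any $r$ in it; for all sufficiently large $n$ the convergences $f(a_n) \to f(x)$ and $f(w_n) \to f(y)$ give $f(a_n) < r < f(w_n)$, and order-convexity of $f(C_n)$ then forces $r \in f(C_n)$. Taking $n$ odd gives $r \in f(A)$, while taking $n$ even gives $r \in f(B)$, contradicting disjointness.

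The main obstacle is the remaining case, in which $f(x)$ and $f(y)$ form a jump in $Y$ and no such $r$ exists. Here I would argue that for $n$ odd, cleavability gives $f(a_n) \neq f(x)$ and $f(w_n) \neq f(y)$; and since $(f(x), f(y)) = \emptyset$, the open neighborhoods of $f(x)$ that avoid $f(y)$ contain only points $\leq f(x)$, forcing $f(a_n) < f(x)$ eventually, and symmetrically $f(w_n) > f(y)$ eventually. Then $f(C_n)$ is a connected subset of $Y$ containing points strictly below $f(x)$ and strictly above $f(y)$, but every connected subset of a LOTS lies entirely on one side of any jump. This impossibility is the crux of the argument and completes the proof.
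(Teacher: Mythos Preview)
Your argument has one genuine gap. The assertion ``Since $x, y \in B$, this forces $f(x) \neq f(y)$'' is unjustified: the cleaving condition $f(A)\cap f(B)=\emptyset$ says nothing about whether two points of $B$ receive distinct images, and nothing else in your setup rules out $f(x)=f(y)$. If that equality holds, neither your interval argument nor your jump argument can begin. The repair is immediate: adjoin $y$ to $A$, taking $A=\{y\}\cup\bigcup_{n\text{ odd}}C_n$. You have already arranged $y\neq x$ and $y\notin C_n$ for every $n$, so $x\in B$, $y\in A$, and cleavability now genuinely yields $f(x)\neq f(y)$. The even components still lie in $B$, so your non-jump case is unchanged; in the jump case one does not even need the strict inequalities, since $f(a_n)\le f(x)$ and $f(w_n)\ge f(y)$ eventually, and then convexity places $f(x)\in f(C_n)\subset f(A)$ for large odd $n$, contradicting $x\in B$.

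With that correction your route is sound and genuinely different from the paper's. The paper does not split the family $\{C_n\}$; instead it invokes Theorem~\ref{connected} together with Lemma~\ref{vanja} to argue that among the continuous maps separating $x$ from $\hat{x}$ there must be one that is \emph{injective} on every $C_n$, and then uses that injectivity to trap $f(C_n)$ inside any interval containing both $f(a_n)$ and $f(b_n)$. Your approach bypasses those two auxiliary results entirely, using only that the continuous image of a connected set in a LOTS is order-convex, and manufacturing the contradiction by forcing a single value (or the jump itself) to lie in the images of both an odd and an even component. This is more self-contained and elementary, at the price of a short case analysis on the order structure of $Y$.
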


\begin{proof} What this lemma is trying to show is that families of connected components of $X$ behave in the same way as if $X$ were a LOTS. 

Let $\left\{C_n = [a_n, b_n]: n \in \omega \right\}$ be a family of non-trivial connected components of $X$. Assume for a contradiction that both $\left\langle a_n \right\rangle$ and $\left\langle b_n \right\rangle$ converge to a single point $x$, but some sequence $\left\langle w_n \right\rangle$, where each $w_n$ belongs to a different connected component $C_n$, converges to a point $\hat{x} \neq x$. We will show that such an $X$ cannot be cleavable over a separable LOTS $Y$. 

All continuous functions from $X$ to $Y$ either cleave apart $x$ and $\hat{x}$, or they do not. Of those that do cleave apart $x$ and $\hat{x}$, if none of them were injective on each $C_n$, then by Theorem~\ref{connected} and Lemma~\ref{vanja} we may construct a set $A$ along which no continuous $f: X \rightarrow Y$ can cleave. Therefore there must exist a continuous $f$ that cleaves apart $x$ and $\hat{x}$, and such that every $f|_{C_m}$ is injective. 

Consider $f(x)$. Since $f$ cleaves apart $x$ and $\hat{x}$, we know $f(x) \neq f(\hat{x})$, and since $Y$ is Hausdorff, let $V_1$ and $V_2$ be disjoint open intervals containing $f(x)$ and $f(\hat{x})$ respectively. By continuity of $f$, and by assumption, $f(x)$ must also contain all but finitely many of the $f(a_n)$ and $f(b_n)$. Let $j$ be the least element of $\omega$ such that $f(a_m)$ and $f(b_m)$ are contained in $V_1$ for every $m > j$. As $f$ is injective on each $C_n$, this implies that if $f(a_m)$ and $f(b_m)$ are elements of $V_1$, then $f(C_m) \subset V_1$ as well. Therefore $f(C_m) \subset V_1$ for every $m > j$. But $V_2$ must contain all but finitely many of the $f(w_m)$. Thus for every open $V_1 \ni f(x)$ and open $V_2 \ni f(\hat{x})$, $V_1 \cap V_2 \neq \emptyset$. This contradicts the fact that $Y$ is Hausdorff. Therefore the sequence $\left \langle w_n \right\rangle$ must converge to $x$. \end{proof} 

\begin{lemma} \label{endpoint} Let $X$ be a compact $T_2$ space cleavable over a separable LOTS $Y$. If $C \subseteq X$ is a non-trivial connected component, and $x \in C$ is such that $x \in \overline{X \setminus C} \setminus (X \setminus C)$, then $x$ must be an endpoint of $C$.\end{lemma}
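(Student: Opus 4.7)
Plan: Argue by contradiction, assuming $x$ is not an endpoint of $C$. By Lemma~\ref{subset} $C$ is itself cleavable over $Y$, so by Theorem~\ref{buzyakova} $C$ is a LOTS; the non-endpointness of $x$ then means $C \setminus \{x\} = C_L \sqcup C_R$ with both pieces nonempty and open, and $C$ has endpoints $a <_C x <_C b$. By Proposition~\ref{basics} $X$ is first-countable, so fix a sequence $(z_n) \subseteq X \setminus C$ with $z_n \to x$.

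The first key step is a uniform structural claim: every continuous $f \colon X \to Y$ that cleaves $\{z_n : n \in \omega\}$ satisfies $|M_f \cap C| \geq \mathfrak{c}$. Indeed, cleaving $\{z_n\}$ forces $f(z_n) \notin f(C)$; since $f(C)$ is a closed interval $[P,Q]$ in $Y$ and $f(z_n) \to f(x) \in f(C)$, the limit $f(x)$ lies in $\partial f(C)$, say $f(x) = Q$. An injective $f|_C$ would be a homeomorphism onto $[P,Q]$ sending $x$ to a topological endpoint of $C$, contradicting the hypothesis; so $f|_C$ is non-injective. Since $f(x) = \max f(C)$ is attained at the interior point $x$, the sub-continua $f([a,x]_C) = [m_L,Q]$ and $f([x,b]_C) = [m_R,Q]$ both reach $Q$, and a case split on whether one of $m_L, m_R$ equals $Q$ (collapsing a non-degenerate half of $C$ to a point) or $\max(m_L,m_R) < Q$ (yielding uncountably many matched preimage pairs $(c_L,c_R) \in C_L \times C_R$ via the intermediate value property on the continua) gives $|M_f \cap C| \geq \mathfrak{c}$ in either case.

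The second step is to feed this bound into Lemma~\ref{vanja} with family $\mathcal{F}$ equal to the set of all continuous $X \to Y$ that cleave $\{z_n\}$, so $\tau := |\mathcal{F}| \leq 2^{\aleph_0}$, together with $A = \{z_n\}$ and $B$ a small disjoint witness (for instance a singleton $\{c\}$ for some $c \in C_R$). The bound $|M_f \cap (X \setminus (A \cup B))| \geq \mathfrak{c} \geq \tau$ holds uniformly over $\mathcal{F}$, so Lemma~\ref{vanja} returns disjoint $U \supseteq A$ and $V \supseteq B$ with $f(U) \cap f(V) \neq \emptyset$ for every $f \in \mathcal{F}$. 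Applying cleavability of $X$ to the set $U$ produces a continuous $f' \colon X \to Y$ with $f'(U) \cap f'(X \setminus U) = \emptyset$; since $V \subseteq X \setminus U$ this gives $f'(U) \cap f'(V) = \emptyset$, so $f' \notin \mathcal{F}$, meaning $f'$ does not cleave $\{z_n\}$.

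The hardest part is closing the loop, since cleaving $U \supseteq \{z_n\}$ does not formally entail cleaving $\{z_n\}$ and so the above is not yet a contradiction. Two routes look viable: either iterate the Lemma~\ref{vanja} argument with a larger $B$ (adding witnesses from $C_L$ and from $X \setminus C$) so as to constrain where a collision inside $U$ may occur until any cleaver of the resulting $U$ must in fact separate each $z_n$ from the rest of $X$; or bypass Lemma~\ref{vanja} and construct an explicit non-cleavable set modeled on Arhangel'ski\u\i's Lemma~\ref{example}, using that $C$ is a separable first-countable compact LOTS continuum (hence metrizable and homeomorphic to $[0,1]$) to split a countable dense subset of $C$ near $x$ into two Baire-category-like pieces and interleave with the $z_n$'s, producing a set that no continuous $f \colon X \to Y$ can cleave and thus directly contradicting cleavability.
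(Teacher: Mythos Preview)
Your main line via Lemma~\ref{vanja} stalls at exactly the point you flag, and the gap is real rather than cosmetic. The obstacle to your Route~1 is structural: Lemma~\ref{vanja} needs the lower bound on $|M_{f}\setminus(A\cup B)|$ for \emph{every} function in the chosen family, yet your $\mathfrak c$-bound on $M_f\cap C$ holds only for $f\in\mathcal F$. Functions outside $\mathcal F$ may have small $M_f$ --- for instance, an $f$ with $f|_C$ injective necessarily sends $x$ to an interior point of $f(C)$ and so fails to cleave $\{z_n\}$, while its only forced collisions are the countably many pairs $(z_n,c_n)$ with $f(z_n)=f(c_n)\in f(C)$. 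Enlarging $B$ does nothing about this. Compare the successful use of Lemma~\ref{vanja} inside the proof of Lemma~\ref{middle}: there $A=\{x\}$, $B=\{\hat x\}$, and the excluded functions are exactly those with $f(x)=f(\hat x)$, which \emph{automatically} satisfy $f(A)\cap f(B)\neq\emptyset$; that automatic failure is what lets the argument close, and no analogous choice of $A,B$ is available for the family ``$f$ fails to cleave $\{z_n\}$''.

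Your Route~2 is precisely the paper's proof. The paper simply observes that if $x$ were interior to $C$ then, since $C$ is a separable first-countable compact connected LOTS and hence homeomorphic to $[0,1]$, the subspace $C\cup\{z_n:n\in\omega\}$ is a copy of Arhangel'ski\u\i's space from Lemma~\ref{example}; the same explicit set (irrationals on a subinterval to one side of $x$, rationals on a subinterval to the other side, a full open interval around $x$ in between) witnesses non-cleavability, and this passes up to $X$ by the contrapositive of Lemma~\ref{subset}. So your fallback is the right argument, but the bulk of your proposal --- the $\mathfrak c$-bound on $M_f\cap C$ and the invocation of Lemma~\ref{vanja} --- ends up playing no role.
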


\begin{proof} Were $x$ not an endpoint of $C$, this situation would be indentical to the one described in Lemma~\ref{example}, and we know such a $C$ and $x$ would imply that $X$ is actually not cleavable over $Y$. Though we were assuming $Y$ to be $\mathbb{R}$ is Lemma~\ref{example} and we are not assuming that in this theorem, we may find a subset of $Y$ that cannot be cleaved apart from its complement for the same reason as to why the set $A$ in Lemma~\ref{example} could not be cleaved from its complement. Therefore $x$ must be an endpoint of $C$. \end{proof}

Up to this point, we have shown that $X$ behaves like a LOTS on certain subsets. All that is left to do is to use these lemmas to prove that the topology on $X$ is equivalent to a LOTS. The following theorem is the main result of this paper.

\begin{thm} \label{LOTS} Let $X$ be a compact $T_2$ space cleavable over a separable LOTS $Y$. Then $X$ is a LOTS. \end{thm}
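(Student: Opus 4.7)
The plan is to reduce the general case to the totally disconnected case established in Theorem~\ref{result}. As in the rest of the paper, I assume there is a continuous $f:X\to Y$ with $M_f$ scattered. I would carve out a closed, totally disconnected ``skeleton'' $X^*\subseteq X$, use Theorem~\ref{result} to endow $X^*$ with a LOTS order $\leq^*$, and then splice the linearly ordered continua provided by Theorem~\ref{buzyakova} back into $X^*$ to manufacture a linear order on all of $X$ whose order topology agrees with the original.

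Let $\{C_\alpha\}$ enumerate the non-trivial connected components of $X$. By Lemma~\ref{subset} each $C_\alpha$ is cleavable over $Y$, so by Theorem~\ref{buzyakova} it is identified with a linearly ordered continuum $[a_\alpha,b_\alpha]$. Set $X^*:=X\setminus\bigcup_\alpha(a_\alpha,b_\alpha)$. Lemma~\ref{endpoint} shows that no interior point of $C_\alpha$ is a limit of $X\setminus C_\alpha$, so each $(a_\alpha,b_\alpha)$ is open in $X$ and $X^*$ is closed, hence compact and $T_2$. Any connected subset of $X^*$ is contained in some component of $X$, and $C_\alpha\cap X^*=\{a_\alpha,b_\alpha\}$ is disconnected, so $X^*$ is totally disconnected. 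By Lemma~\ref{subset}, $X^*$ is cleavable over $Y$, and $M_{f|_{X^*}}\subseteq M_f\cap X^*$ is still scattered; Theorem~\ref{result} then gives a LOTS order $\leq^*$ on $X^*$ inducing its topology.

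To assemble the order on $X$, relabel each pair $\{a_\alpha,b_\alpha\}$ so that $a_\alpha<^*b_\alpha$ and orient the continuum order on $C_\alpha$ so that $a_\alpha\leq_\alpha b_\alpha$. Define $\leq$ on $X$ by: within a single $C_\alpha$ use $\leq_\alpha$; for points in different components, compare their canonical representatives in $X^*$ under $\leq^*$ (take $a_\alpha$ as the representative of any point in $(a_\alpha,b_\alpha)$), placing the open interval $(a_\alpha,b_\alpha)$ strictly between $a_\alpha$ and $b_\alpha$ in $\leq$. To verify that the resulting order topology coincides with the topology of $X$, I would check local bases pointwise. At an interior point of some $C_\alpha$, the LOTS structure on $(C_\alpha,\leq_\alpha)$ suffices. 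At a point $x\in X^*$, convergence along $X^*$ is handled by $\leq^*$, and convergence through interiors of sequences of non-trivial components is exactly what Lemma~\ref{middle} controls (if the endpoints of the components converge to $x$, arbitrary choices of interior points do as well), while Lemma~\ref{endpoint} rules out stray convergence of interior points of a fixed $C_\alpha$ to any point outside $C_\alpha$.

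The principal technical obstacle is order compatibility: the construction only succeeds if, for every non-trivial $C_\alpha$, the pair $\{a_\alpha,b_\alpha\}$ is $\leq^*$-consecutive in $X^*$. For if some $c\in X^*$ satisfied $a_\alpha<^*c<^*b_\alpha$, then $c$ would be forced to lie strictly between $a_\alpha$ and $b_\alpha$ in $\leq$ yet outside the continuum $C_\alpha$, which would disconnect $C_\alpha$ in the order topology and contradict its connectedness in $X$. I therefore expect the proof to require either a careful refinement of the application of Theorem~\ref{result} to $X^*$ (arranging the embedding of $X^*$ into a LOTS $\hat Y$ so that the images of $a_\alpha$ and $b_\alpha$ are adjacent in $\hat Y$, using the freedom exploited in Lemma~\ref{less} to collapse any offending intermediate point into the appropriate clopen block) or, equivalently, first quotienting $X^*$ by identifying $a_\alpha$ with $b_\alpha$ for each $\alpha$, applying Theorem~\ref{result} to the quotient, and then re-splitting each identified point when inserting the corresponding continuum. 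Securing this adjacency is precisely where the component-level inputs of Lemmas~\ref{middle} and~\ref{endpoint} must be reconciled with the totally disconnected machinery of Section~2, and it is the main point at which the proof is delicate.
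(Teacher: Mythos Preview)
Your plan coincides with the paper's: form the totally disconnected skeleton $X^*=C$ by deleting the interiors of the non-trivial components, invoke Theorem~\ref{result} to order $C$, splice each continuum $C_\alpha$ back in via Theorem~\ref{buzyakova}, and then verify $\tau_O=\tau_X$ using Lemmas~\ref{middle} and~\ref{endpoint}. The paper's treatment of $\tau_X\subseteq\tau_O$ and $\tau_O\subseteq\tau_X$ proceeds just as you sketch, by analyzing basic neighborhoods at interior points of components versus skeleton points, and by showing order rays are sequentially closed.

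Where your proposal is more honest than the paper is on the adjacency issue. You are right that the construction only yields a genuine linear order on $X$ if each pair $\{a_\alpha,b_\alpha\}$ is $\leq^*$-consecutive in $X^*$; otherwise there is no coherent place to insert $(a_\alpha,b_\alpha)$. The paper does not prove this: after obtaining LOTS orders on $C$ and on each component it simply asserts ``Thus there is a linear order on $X$ that matches the linear orders on both $C$ and $D$'' and moves on. Since $C$ is in general a discrete-in-places set and Theorem~\ref{result} produces \emph{some} LOTS order without reference to the component structure of $X$, nothing in the argument forces $a_\alpha$ and $b_\alpha$ to land adjacently (already for $X=[0,1]\cup\{2\}$ the three-point skeleton admits the order $0<^*2<^*1$). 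So the gap you isolate is real and is shared by the paper. Of your two proposed repairs, note that the quotient route is not automatic: cleavability of $X^*$ does not obviously descend to $X^*/\{a_\alpha\sim b_\alpha\}$, because a map cleaving a saturated set need not identify $a_\alpha$ with $b_\alpha$, so Theorem~\ref{result} cannot be applied to the quotient without further work. The more promising line is the one you mention first, namely revisiting the Section~2 construction and using the freedom in Lemmas~\ref{partition}--\ref{less} to arrange that each endpoint pair is mapped to adjacent points of the target LOTS; but neither you nor the paper actually carries this out.
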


\begin{proof}If $X$ is either totally disconnected, or connected, then we know by Theorems~\ref{buzyakova} and \ref{result} that $X$ is a LOTS. Therefore assume $X$ is neither totally disconnected, nor connected. Define $D \subseteq X$ to be those elements of $X$ that are in the interior of non-trivial connected components, and let $C = X \setminus D$. I first claim that $C$ is a closed, totally disconnected subset of $X$.

It is obvious by construction that $C$ is totally disconnected. To prove it is closed, it is sufficient to show that $C$ is sequentially closed, as $X$ is first-countable and therefore sequential. Let $\left\langle x_n \right\rangle$ be a sequence contained in $C$. If it does not converge, or contain a convergent subsequence, then it is obviously sequentially closed in $C$. Now assume it does converge, without loss of generality, to a single point $x$. If $x$ belongs to a non-trivial connected component, then by Lemma~\ref{endpoint} $x$ must be an endpoint of the connected component, and therefore a member of $C$. If $x$ does not belong to a non-trivial connected component, then it belongs to $C$ by assumption. Therefore $C$ is sequentially closed, and closed in $X$.

By Theorem~\ref{result}, $C$ is a LOTS under the subspace topology of $X$. We also know each connected component of $X$ has a topology equivalent to the linear order topology. Thus there is a linear order on $X$ that matches the linear orders on both $C$ and $D$. We claim the topology derived from this linear order on $X$, $\tau_O$, is equal to the original topology of $X$, $\tau_X$. 

To first show $\tau_X \subseteq \tau_O$, let $U$ be a basic open set in $\tau_X$. We must show for every $x \in U$, there exists a basic open interval in $\tau_O$ that is a subset of $U$. 

If $x \in U \cap D$, then by Theorem~\ref{buzyakova} we may find an open interval containing $x$ contained in $U$. Now let $x \in U \cap C$. Without loss of generality, let $x$ be the left endpoint of some non-trivial connected component. (If it is not part of a connected component, then we may modify the proof.) By Theorem~\ref{buzyakova}, we know we may find an element of the connected component, $z$, such that $(x,z) \subset U$. Now if $x$ is isolated from the left within the linear order on $X$, then there exists some greatest $x' < x$, and thus $x \in (x',z) \subset U$. If $x$ is not isolated from the left, then by Theorem~\ref{result}, we know there exists a $y \in C$ such that $(y,x) \cap C$ must be contained in $U$. We claim $(y,z)$ must be contained within $U$.

Assume for a contradiction that $(y,z)$ is not a subset $U$. Since we know $(y,z) \cap C$ is contained within $U$, the only elements preventing $(y,z)$ from being within $U$ are the elements of the non-trivial connected components. This implies that there is a countable family of connected components $\left\{C_n = [a_n,b_n]: n \in \omega\right\}$ contained within $(y,z)$ such that $\left\langle a_n \right\rangle$ converges to $x$, but such that there exists a sequence of elements $\left\langle w_n \right\rangle$, each $w_n$ contained within a different connected component, which converges to a point other than $x$. By Lemma~\ref{middle} this is impossible. Thus $x \in (y,z) \subset U$, and $\tau_X \subseteq \tau_O$.

To show $\tau_O \subseteq \tau_X$, it is sufficient to show that $[-\infty,b)$ and $(a,\infty]$ are both open in $\tau_X$. Let $b \in C$ such that $b$ is, without loss of generality, the left endpoint of a non-trivial connected component. (Again, if it is not an endpoint of some connected component, we may modify the proof accordingly.) We will show that $[b,\infty]$ is closed in $\tau_X$, and thus $X \setminus [b,\infty] = [-\infty,b)$ is open. To show $[b,\infty]$ is closed in $X$, it is enough to show it is sequentally closed.

Let $\left\langle x_n \right\rangle$ be a sequence contained in $[b,\infty]$ that converges to a single point $x$. ($X$ is first-countable and compact thus sequentially compact.) If $x \in D$, then all but finitely many points of the sequence must be contained within the same connected component to which $x$ belongs. Thus if $\left\{x_n: n \in \omega \right\} \subseteq [b,\infty]$, then $x$ must be contained within $[b, \infty]$ as well. If, however, $x \in C$, then assume without loss of generality it is the left endpoint of a non-trivial connected component. 

If $\left\langle x_n \right\rangle$ is a monotonically decreasing sequence, we would have the same case as when $x \in D$, so $x$ must be contained within $[b, \infty]$ as well. Therefore assume the sequence is monotonically increasing. If all but finitely many of these elements are contained within $C$, then by Theorem~\ref{result} we know if $\left\{x_n: n \in \omega \right\} \subseteq [b,\infty]$, then $x$ must be contained within $[b, \infty]$. Assume, however, that infinitely many of these elements are member of $D$; that is, they belong to non-trivial connected components. We know only finitely many of this sequence may belong to the same connected component (otherwise there exists a subsequence that converges to a point other than $x$), thus without loss of generality, assume each member of $D$ in this sequence belongs to a different connected component. From the way we have defined the order on $X$, we know the right end-point $y_n$ of the connected component to which $x_n$ belongs will be contained within $[b,\infty]$. We know this new sequence, $\left\langle y_n \right\rangle$ converges to $x$ as well. We are now left with the same situation we have just considered, in which there exists a monotonically increasing sequences of elements of $C$ converging to $x$; therefore we know if $\left\{x_n: n \in \omega \right\} \subseteq [b,\infty]$, then $x$ must be contained within $[b, \infty]$. Since we have considered all types of sequences contained in $[b,\infty]$, and have proven this set is sequentially closed, it must therefore be closed in $X$. This implies $[-\infty,b)$ is open in $X$. By a similar argument, we may show $(a,\infty]$ is open in $X$ as well. Thus $\tau_O \subseteq \tau_X$, implying $\tau_O = \tau_X$ and that $X$ is a LOTS.\end{proof}

\section{Conclusion}

In \cite{Arhangelskii}, Arhangel'ski\u\i\ stated one motivation for cleavability as a generalization of injective mappings. That is, if an injective map from a compact space $X$ to a Hausdorff space $Y$ gives us as much information about $X$ as we have about $Y$, then how much can we find out about a compact $X$ if it is cleavable over a Hausdorff $Y$. As a motivating question, he asks if an infinite compactum cleavable over a LOTS is a LOTS itself. 

We have now provided a partial answer to Arhangelski\u\i\'s question, showing that if $X$ is a compact space cleavable over a separable LOTS $Y$ such that there exists a continuous $f: X \rightarrow Y$ with $M_f$ scattered, then $X$ is a LOTS. This gives us a great amount of information about $X$, as many papers have been written detailing properties of compact separable LOTS, some of which can be found in \cite{lb} and \cite{kunen}. The following questions, however, still remain open:

\begin{question} Let $X$ be a compact $T_2$ space cleavable over a separable LOTS $Y$. Is $X$ homeomorphic to a subspace of $Y$? \end{question}

\begin{question} Let $X$ be a compact $T_2$ space cleavable over a separable LOTS $Y$. Must there exist a continuous $f: X \rightarrow Y$ such that $M_f$ is countable and scattered in $X$? \end{question}

\nocite{*}
\bibliographystyle{nar}
\bibliography{scattered}

\end{document}